\documentclass[10pt]{amsart}
\parindent=0pt
\usepackage{amssymb, amsmath}
\usepackage{graphics}
\usepackage{latexsym}
\usepackage{amsmath}
\usepackage{amssymb,amsthm,amsfonts}
\usepackage{amscd}
\usepackage[arrow, matrix, curve]{xy}
\usepackage{syntonly}
\usepackage{yfonts}
\usepackage{tikz-cd}
\usepackage{filecontents}
\usepackage{mathtools}
\usepackage{stmaryrd}
\usepackage{MnSymbol}
\ExecuteOptions{dvips} \marginparwidth 0pt \oddsidemargin 1.5 truecm
\evensidemargin 1.5 truecm \marginparsep 1pt \topmargin 1pt
\textheight 22.5 truecm \textwidth 14.5 truecm

\DeclareMathOperator{\spec}{Spec}
\DeclareMathOperator{\gal}{Gal}

\DeclareMathOperator{\br}{Br}
\DeclareMathOperator{\ram}{Ram}

\theoremstyle{plain}
\newtheorem{thm}{Theorem}[section]
\newtheorem{theorem}[thm]{Theorem}
\newtheorem{lemma}[thm]{Lemma}
\newtheorem{remcor}[thm]{Remark/Corollary}

\theoremstyle{definition}

\newtheorem{remark}[thm]{Remark}

\numberwithin{equation}{thm}

\newcommand{\sL}{{\mathcal L}}

\newcommand{\sO}{{\mathcal O}}

\newcommand{\sT}{{\mathcal T}}


\newcommand{\C}{{\mathbb C}}

\renewcommand{\H}{{\mathbb H}}

\renewcommand{\P}{{\mathbb P}}
\newcommand{\Q}{{\mathbb Q}}
\newcommand{\R}{{\mathbb R}}

\newcommand{\V}{{\mathbb V}}

\newcommand{\Z}{{\mathbb Z}}

\newcommand{\rk}{{\rm rank}}


\begin{document}
\title{On Shimura curves generated by families of Galois $G$-covers of curves}
\author{Abolfazl Mohajer}
\address{School of Mathematical and Statistical Sciences, University of Galway, Galway, Ireland.}
\email{abmohajer83@gmail.com}
\subjclass{14A10, 14A15, 14E20, 14E22}
\keywords{Algebraic variety, Galois covering, ramified covering}
\maketitle
\begin{abstract}
In this paper we prove there are no families of cyclic $\Z_n$-covers of elliptic curves which generate non-compact Shimura (special) curves that lie generically in the Torelli locus  $T_g$ of abelian varieties with $g\geq 8$ when $n$ has a proper prime factor $p\geq 7$. This non-existence is also shown for families of $\Z_n$-covers of curves of any genus $s$ and when $n$ has a large enough prime number $p$ (depending on $s$). We achieve these results by applying the theory of Higgs bundles and the Viehweg-Zuo characterization of Shimura curves in the moduli space of principally polarized abelian varieties.
\end{abstract}
\section{Introduction}
Let $M_g$ be the moduli space of complex non-singular algebraic curves. The \emph{Torelli} or \emph{period} morphism $j: M_g\to A_g$ associates to the isomorphism class of a curve $[C]$ the isomorphism class of its Jacobian $[J(C)]$. This morphism is injective by a celebrated result of Torelli. We call the image  
$T^{\circ}_g$ the \emph{open Torelli locus}. The Zariski closure $T_g$ of $T^{\circ}_g$ inside $A_g$ is called the \emph{Torelli locus}. A conjecture by Coleman, based on the analogy with the Manin-Mumford conjecture, says that for $g\geq 4$ there are only finitely many complex curves of genus g such that $J(C)$ has complex multiplication. Combining with the Andr\'e-Oort Theorem, this implies that for $g$ large enough, there are no special subvarieties $Z\subseteq A_g$ of positive dimension that are contained inside $T_g$ and intersect $T^{\circ}_g$ non-trivially. Coleman's conjecture is known to be false for $\{4,5,6,7\}$ by the work of many authors, see the works of de Jong-Noot \cite{DN} and Viehweg-Zuo \cite{VZ05}. Some examples of genera $g=5,7$ were found by Rohde in \cite{R}. All these examples were constructed as families of cyclic coverings of $\P^1$. Some other examples have been constructed in \cite{FGP} and \cite{FPP}. In these two papers, P. Frediani and M. Penegini et al. found examples of non-abelian covers of $\P^1$ and also of coverings of elliptic curves which give rise to Shimura varieties in $T_g$. There has also been a lot of effort in the opposite direction: in proving that some loci do not contain any Shimura subvariety. The first results here are due to Moonen who proved in \cite{M1} that there are precisely 20 families giving rise to Shimura subvarieties in the Torelli locus. We extended this result to all abelian covers of $\P^1$  in \cite{MZ}. A significant progress has been made through results of Lu and Zuo in \cite{LZ} and Chen, Lu and Zuo in \cite{CLZ}. In these papers the authors use techniques from theory of Higgs bundles developed for instance in \cite{VZ04} to exclude large classes of special subvarieties from the Torelli locus and in particular from the Torelli locus of cyclic $G$-covers of $\P^1$.\par In the present paper we follow the approach of these papers and use Higgs bundles in order to exclude non-compact Shimura curves from the Torelli locus of $\Z_p$-covers of elliptic curves when $p\geq 7$. If the genus of the base curve is $s>1$, our methods can be used to prove that there does not exist families of genus $g$ curves of $\Z_p$-covers of curves of genus $s$ giving rise to non-compact Shimura curves when $p$ is a large enough prime number depending on $s$, see Theorem \ref{non-comp p gen}. This holds also for families of $\Z_n$-covers, when $n$ is a number having large enough prime factor $p$ (again depending on $s$), see Theorem \ref{non-comp n gen}. In section 2 we review some facts about Galois covers of curves and their families which will be used to analyze such families later on in the paper. In section 3 we introduce Higgs bundles and their connection to Shimura curves in the moduli space of abelian varieties and prove our main results.

\section{Galois coverings of curves}
Fix a smooth curve $C^{\prime}$ of genus $g$, suppose that we have a positive integer $d$ and a divisor $D=\displaystyle\sum_{k=1}^ru_kP_k$ on $C^{\prime}$ such that $1\leq u_k<n$ and that $n\divides\displaystyle\sum_{k=1}^ru_k$. Furthermore suppose that $\sL$ is a line bundle on $C^{\prime}$ such that $D$ is the divisor of zeroes of a section of $\sL^n$. To each such datum $(\sL, n, D)$ we can associate a cyclic covering $C\to C^{\prime}$ of degree $n$ such that there are $\gcd(n,u_k)$ points above the point $P_k$ on $C$. Let us define the line bundles $\sL^{(i)}=\sL^i(-\sum\lfloor\frac{iu_k}{n}\rfloor P_k)$ for any $0\leq i<n$ on the curve $C$. Then $C=\spec_{\sO_{C^{\prime}}}(\oplus {\sL^{(i)}}^{-1})$.  In particular, if in the branch divisor $D, u_k=1$, then the cover $C\to C^{\prime}$ is totally ramified. There is only one point over $P_k$ on $C$ necessarily of ramification index $d$. Note that in this case we also have $\sL^{(i)}=\sL^i$. Conversely, to a $n$-sheeted cyclic cover $C\to C^{\prime}$, we can associate a line bundle $\sL$ such that $\sL^n=\sO_{C^{\prime}}(D)$. \par Note that if $C\to C^{\prime}$ is a cyclic cover with covering group $G\cong \Z/n\Z$, then $G$ acts on $C$ and hence also on the space of global holomorphic 1-forms $H^0(C,K_C)$. Let $H^0(C,K_C)=\oplus H^0(C,K_C)_i$ be the decomposition of $H^0(C,K_C)$ into $G$-eigenspaces as a $G$-representation. The dimension of the subvector space $H^0(C,K_C)_i$ can be computed by the well-known Chevalley-Weil formula:

\begin{theorem}{(Chevalley-Weil)}\label{CW}
 $h_i=\dim_{\C}H^0(C,K_C)_i=g-1+\displaystyle\sum_{k=1}^r\langle\frac{-iu_k}{n}\rangle$ for $1\leq i\leq n-1$ and $h_0=g$.\par
 In the special case where the cover is totally ramified, i.e., $u_k=1$ for every $k$, then $h_i=g-1+r(1-\frac{i}{n})$. This happens for example if $n=p$ is a prime number. 
\end{theorem}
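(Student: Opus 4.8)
The plan is to establish the Chevalley-Weil formula by computing the character of the $G$-representation $H^0(C, K_C)$ and decomposing it into eigenspaces. The key input is the Eichler trace formula (equivalently the holomorphic Lefschetz fixed-point theorem), which expresses the trace of the action of a nontrivial automorphism $\sigma \in G$ on $H^0(C, K_C)$ in terms of local data at the fixed points of $\sigma$. First I would fix a generator $g_0$ of $G \cong \Z/n\Z$ and let $\zeta = e^{2\pi i/n}$. Since $G$ acts on the cover $C \to C'$ with the branch points $P_k$ carrying local monodromy data $u_k$, the fixed points of $g_0^j$ lie over the branch locus, and near each such point $g_0$ acts on a local coordinate by multiplication by a root of unity determined by $u_k$ and the covering datum $(\sL, n, D)$.

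Next I would apply the Eichler/Lefschetz trace formula: for $1 \le j \le n-1$, the trace of $g_0^j$ on $H^0(C, K_C)$ equals $-1 - \sum_{k} \frac{\zeta_k^{?}}{1 - \zeta_k^{?}}$ summed over the relevant fixed-point contributions, where the exponents record how $g_0^j$ rotates the cotangent direction at each fixed point above $P_k$. These local rotation numbers are exactly $\langle -j u_k / n\rangle$ (the fractional part), reflecting the local structure $\sL^{(i)} = \sL^i(-\sum \lfloor iu_k/n\rfloor P_k)$ of the eigen-line-bundles introduced before the statement. I would then recover $h_i$ by Fourier inversion over the cyclic group: $h_i = \frac{1}{n} \sum_{j=0}^{n-1} \zeta^{-ij}\, \tr(g_0^j \mid H^0(C,K_C))$, and the trivial-trace term gives $h_0 = g$ directly from $\dim H^0(C,K_C) = g$ (here $g$ is the genus of $C$, determined by Riemann-Hurwitz).

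Carrying out the Fourier inversion, the constant $-1$ in each trace contributes the $g-1$ term after summation (using $\dim H^0 = g$ to fix the $j=0$ contribution), while the fixed-point sums assemble, via the standard geometric-series identity $\frac{1}{n}\sum_{j} \zeta^{-ij} \frac{\zeta^{j a}}{1-\zeta^{ja}} = \langle \frac{-ia}{n}\rangle$ type evaluation, into $\sum_{k=1}^r \langle \frac{-iu_k}{n}\rangle$. This yields the first displayed formula. For the totally ramified case $u_k = 1$, one has $\langle \frac{-i}{n}\rangle = 1 - \frac{i}{n}$ for $1 \le i \le n-1$, so the sum over the $r$ branch points collapses to $r(1 - \frac{i}{n})$, giving the second formula; I would note that $n = p$ prime forces $\gcd(n, u_k) = 1$ and hence total ramification, justifying the final remark.

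The main obstacle I expect is bookkeeping the local rotation numbers correctly and keeping the sign and normalization conventions consistent between the eigenspace index $i$ and the branch data $u_k$: a sign slip in the exponent of $\zeta$ interchanges $\langle \frac{-iu_k}{n}\rangle$ with $\langle \frac{iu_k}{n}\rangle$ and breaks the formula. The cleanest way to avoid this is to read off the rotation numbers directly from the explicit eigen-decomposition $H^0(C, K_C)_i \cong H^0(C', \sL^{(i)} \otimes K_{C'})$ afforded by the description $C = \spec_{\sO_{C'}}(\oplus {\sL^{(i)}}^{-1})$, compute $\deg \sL^{(i)}$ and apply Riemann-Roch on $C'$, thereby bypassing the trace formula entirely; I would use this as a cross-check against the Eichler computation.
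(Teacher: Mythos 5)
The paper itself offers no proof of this theorem: it is quoted as the classical, well-known Chevalley--Weil formula, so your argument can only be judged against the statement and the conventions fixed in Section 2. Judged that way, your proposal contains one genuine error. In the paper's notation $g$ is the genus of the \emph{base} curve $C'$ (Section 2 opens with ``Fix a smooth curve $C'$ of genus $g$''), not the genus of the cover $C$ as you assert. Thus $h_0=g$ is the statement that the $G$-invariant part of $H^0(C,K_C)$ consists exactly of the pullbacks of forms from $C'$, whose dimension is $g(C')$; your justification ``$h_0=g$ directly from $\dim H^0(C,K_C)=g$'' conflates the invariant subspace with the whole space, and is internally inconsistent with your own formula for $h_i$, $i\neq 0$ (if $h_0$ exhausted $H^0(C,K_C)$, all other eigenspaces would vanish). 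The same confusion propagates into your Fourier inversion: for $i\neq 0$ the constant terms of the traces contribute $\frac{1}{n}\bigl(g(C)-1\bigr)$, and converting this into the $g(C')-1$ of the statement requires Riemann--Hurwitz, $g(C)=n\bigl(g(C')-1\bigr)+1+\frac{1}{2}\sum_k\bigl(n-\gcd(n,u_k)\bigr)$, together with a check that the surplus $\frac{1}{2n}\sum_k\bigl(n-\gcd(n,u_k)\bigr)$ is exactly absorbed by the fixed-point sums, leaving $\sum_k\langle\frac{-iu_k}{n}\rangle$. As written, your bookkeeping does not produce the theorem's formula.

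Beyond this, both routes you describe are left incomplete at their decisive steps. The Eichler/Lefschetz computation is schematic (question marks in the exponents, the averaging identity invoked only as a ``type evaluation''; note also that when $\gcd(n,u_k)>1$ a point over $P_k$ is fixed only by the powers $g_0^j$ lying in its stabilizer, so the inner sum runs over a subgroup rather than over all $j$, which changes the evaluation). Your ``cross-check'' is in fact the standard and cleanest proof, but it has two gaps as sketched. First, the identification must be $H^0(C,K_C)_i\cong H^0(C',K_{C'}\otimes\sL^{(n-i)})$, not $H^0(C',K_{C'}\otimes\sL^{(i)})$: from $\pi_*\sO_C=\oplus_i(\sL^{(i)})^{-1}$ and relative duality one gets $\pi_*K_C=\oplus_i K_{C'}\otimes\sL^{(i)}$, and the index reversal in matching characters is exactly what produces $\langle\frac{-iu_k}{n}\rangle$ rather than $\langle\frac{iu_k}{n}\rangle$, since $\deg\sL^{(j)}=\sum_k\langle\frac{ju_k}{n}\rangle$. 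Second, Riemann--Roch yields the dimension only after proving $h^1(K_{C'}\otimes\sL^{(j)})=h^0\bigl((\sL^{(j)})^{-1}\bigr)=0$ for $j\neq 0$: one has $\deg\sL^{(j)}\geq 0$, and when $\deg\sL^{(j)}=0$ the bundle is torsion and nontrivial precisely because $C$ is irreducible, so it has no sections. This vanishing is absent from your outline, and it is not cosmetic: for disconnected covers the stated formula is false. Fixing the meaning of $g$ and completing either route would give a correct proof.
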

In the  above theorem, $\langle\cdotp\rangle$ denotes the fractional part of a real number.

\section{Shimura curves and Higgs bundles}
\subsection{Shimura varieties}
In this subsection, we will breifly introduce and review some facts about Shimura varieties and special subvarieties. In this note we only consider connected Shimura varieties which are complex algebraic varieties of the form $\Gamma\backslash X^+$, where $X^+$ is a Hermitian symmetric domain and $\Gamma$ is a congruence subgroup of a semisimple algebraic group $G^{der}(\Q)$ acting on $X^+$. Inside $\Gamma\backslash X^+$ there are Shimura or special subvarieties associated with Shimura subdata, see \cite{Mil}, \S 5. In simple terms, they are subvarieties of the form $\Gamma\backslash {X^{\prime}}^+\subseteq \Gamma\backslash X^+$, where ${X^{\prime}}^+\subseteq X^+$ are equivariant embeddings of Hermitian symmetric subdomains of $X^+$. Equivariantly embedded here means that it is defined by some semi-simple Lie subgroup $G^{\prime}$ of $G^{der}(\R)$ and the inclusion ${X^{\prime}}^+\hookrightarrow X^+$ is equivariant with respect to $G^{\prime}\hookrightarrow G^{der}(\R)$. The zero-dimensional special subvarieties are CM points. In this paper, we are interested about higher dimensional special subvarieties. The most important Shimura variety for us is the moduli space of complex $g$-dimensional principally polarized abelian varieties $A_g$. Here $A_g=A_{g,l}=\Gamma_g(l)\backslash \H_g$, where $\H_g$ is the Siegel uper half-space of genus $g$ and $\Gamma_g(l)$ is the principal congruence subgroup of level-$l$ in $Sp_{2g}(\Z)$ which is the kernel of the natural map $Sp_{2g}(\Z)\to Sp_{2g}(\Z/l)$, where $l\geq 3$ is an odd integer. In this case $\Gamma_g(l)$ is torsion-free and the quotient $\Gamma_g(l)\backslash \H_g$ is a smooth complex submanifold. Special subvarities of dimension one are called \emph{Shimura curves}.\\

In this paper we investigate Shimura curves in $A_g$ inside the Torelli locus and cuting the locus $T^{\circ}_g$ non-trivially. As indicated in the introduction, the Coleman-Oort conjecture asserts that for $g$ large enough, there are no special subvarieties  inside the Torelli locus and cutiing the locus $T^{\circ}_g$ non-trivially. In this note we investigate this conjecture for subvarieties coming from families of Galois $G$-covers of curves. Indeed we consider families whose fibers are $C_t\to C_t/G$ which give rise to subvarieties in $M_g$ and, by the Torelli map, in $A_g$. Any such variety naturally lies in the Torelli locus and intersects $T^{\circ}_g$. 

\subsection{Higgs bundles}
Let $\overline{f}:\overline{S}\to\overline{B}$ be a family of semi-stable curves representing a Shimura curve $C$ in the Torelli locus $\sT_g$. We further suppose that the smooth fibers of the family are $G$-covers $D\to D/G=D^{\prime}$ and there are no hyperelliptic fibers. Note that, as it is mentioned in \cite{CLZ}, Remark 3.(iii) after a finite base change if necessary, the $G$-action on the fibers induces a $G$-action on the surface $\overline{S}$ which restricts to the $G$-action on the fibers. It is this action on $\overline{S}$ that we use in this paper to appply the theory of cyclic covers. Let us observe some properties of this family: To this family is associated a Higgs bundle with local system $\V_B:=R^1f_*\Q_{\overline{S}\setminus\Delta}$. This Higgs bundle is $(E^{1,0}_{\overline{B}}\oplus E^{0,1}_{\overline{B}}, \theta_{\overline{B}})$ in which
\[E^{1,0}_{\overline{B}}=\overline{f}_*\Omega_{\overline{S}/\overline{B}}, E^{0,1}_{\overline{B}}=R^1\overline{f}_*\sO_{\overline{S}}\]
and the Higgs field is
\[\theta_{\overline{B}}:E^{1,0}_{\overline{B}}\to E^{0,1}_{\overline{B}}\otimes\Omega_{\overline{B}}(\log\Delta_{nc}) \]
There is a decomposition 
\[(E^{1,0}_{\overline{B}}\oplus E^{0,1}_{\overline{B}}, \theta_{\overline{B}})=(A^{1,0}_{\overline{B}}\oplus A^{0,1}_{\overline{B}}, \theta_{\overline{B}}|_{A^{1,0}_{\overline{B}}})\oplus (F^{1,0}_{\overline{B}}\oplus F^{0,1}_{\overline{B}}, 0)\]
where $A^{1,0}_{\overline{B}}$ is ample and $F^{1,0}_{\overline{B}}\oplus F^{0,1}_{\overline{B}}$ is flat and corresponds to a unitary local subsystem $\V^u_B\subset \V_B\otimes\C$. Since we assumed that there are no hyperelliptic fibers in our family, we have the following characterization, see \cite{VZ04}
\begin{equation}
C \text{ is a Shimura curve }\Leftrightarrow \deg E^{1,0}_{\overline{B}}=\frac{\deg A^{1,0}_{\overline{B}}}{2}\deg\Omega^1_{\overline{B}}(\log\Delta_{nc})
\end{equation}
If C is a non-compact Shimura curve, then by \cite{LZ}, \S 3, 
\begin{equation}
g(\overline{F})=\rk F^{1,0}_{\overline{B}} \hspace{1 cm} \text{ for any fiber } \overline{F} \text{ over } \Delta_{nc},
\end{equation}
where $g(\overline{F})$ denotes the geometric genus of $\overline{F}$.\\

Since there is a $G$-action on the surface $\overline{S}$, we have an eigenspace decomposition 
\[\V_B\otimes\C=\bigoplus_{i=0}^{n-1}\V_{B,i};  (E^{1,0}_{\overline{B}}\oplus E^{0,1}_{\overline{B}}, \theta_{\overline{B}})=\bigoplus_{i=0}^{n-1}(E^{1,0}_{\overline{B}}\oplus E^{0,1}_{\overline{B}}, \theta_{\overline{B}})_i\]
and furthermore
\[g=\sum_{i=0}^{n-1}\rk E^{1,0}_{\overline{B},i}\]
and also 
\[\rk E^{0,1}_{\overline{B},i}=\rk E^{0,1}_{\overline{B},n-i}\]

The eigenspace decomposition is compatible with this decomption and we have
\begin{align}
&(A^{1,0}_{\overline{B}}\oplus A^{0,1}_{\overline{B}}, \theta_{\overline{B}})=\bigoplus_{i=0}^{n-1}(A^{1,0}_{\overline{B}}\oplus A^{0,1}_{\overline{B}}, \theta_{\overline{B}}|_{A^{1,0}_{\overline{B}}})_i,\\
&(F^{1,0}_{\overline{B}}\oplus F^{0,1}_{\overline{B}}, 0)=\bigoplus_{i=0}^{n-1}(F^{1,0}_{\overline{B}}\oplus F^{0,1}_{\overline{B}}, 0)_i
\end{align}

Each local subsystem $\V_{\overline{B},i}$ is defined over the n-th cyclotomic field $\mathbb{Q}(\xi_n)$ and so the arithmetic Galois group $\gal(\mathbb{Q}(\xi_n)/\mathbb{Q})$ has a natural action on he above decompositions
\begin{lemma}
Let $\overline{f}:\overline{S}\to\overline{B}$ be a family of semi-stable $G$-covers representing a Shimura curve $C$ in the Torelli locus $\sT_g$. Let $\V^{tr}_{\overline{B},i}\subset\V_{\overline{B},i}$ be the trivial local subsystem, and $((F^{1,0}_{\overline{B},i})^{tr}\oplus (F^{0,1}_{\overline{B},i})^{tr}, 0)$ be the associated trivial flat subbundle. If $\V_{\overline{B},i}$ and $\V_{\overline{B},j}$ are in one  $\gal(\mathbb{Q}(\xi_n)/\mathbb{Q})$-orbit, then
\begin{align}
&\rk \V^{tr}_{\overline{B},i}=\rk \V^{tr}_{\overline{B},j}\\
&\rk (F^{1,0}_{\overline{B},i})^{tr}+\rk (F^{1,0}_{\overline{B},n-i})^{tr}=\rk (F^{1,0}_{\overline{B},j})^{tr}+\rk (F^{1,0}_{\overline{B},n-j})^{tr}
\end{align}
In particular, if $n=p$ is a prime number, for any $1\leq i<j\leq p-1$, 
\begin{align}
&\rk \V^{tr}_{\overline{B},i}=\rk \V^{tr}_{\overline{B},j}\\
&\rk (F^{1,0}_{\overline{B},i})^{tr}+\rk (F^{1,0}_{\overline{B},n-i})^{tr}=\rk (F^{1,0}_{\overline{B},j})^{tr}+\rk (F^{1,0}_{\overline{B},n-j})^{tr}
\end{align}
\end{lemma}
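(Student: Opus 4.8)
The plan is to exploit the fact that the Galois group $\gal(\Q(\xi_n)/\Q)$ acts on the set of eigenspace components $\{\V_{\overline{B},i}\}$ in a way that is compatible with all the extra structure carried by the Higgs bundle, and that objects in a single Galois orbit are therefore isomorphic after extension of scalars, forcing their numerical invariants to coincide. Concretely, the action of $\sigma \in \gal(\Q(\xi_n)/\Q)$ sends $\xi_n \mapsto \xi_n^a$ for some $a$ coprime to $n$, and this permutes the eigenspaces by $i \mapsto ai \bmod n$. So saying that $\V_{\overline{B},i}$ and $\V_{\overline{B},j}$ lie in one orbit means $j \equiv ai \pmod n$ for some such $\sigma$. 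The key point is that $\sigma$ acts as an isomorphism of the whole package — the local system, its rational structure, the Higgs field, and the ample/flat (Deligne/Fujita) decomposition — because these are all defined over $\Q$ and only the eigenspace indexing is permuted by the Galois action on $\Q(\xi_n)$.

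I would organize the argument in three steps. First I would recall that each $\V_{\overline{B},i}$ is a $\C$-local subsystem defined over $\Q(\xi_n)$ and that $\sigma$ carries $\V_{\overline{B},i}$ isomorphically onto $\V_{\overline{B},\sigma(i)}$; since the decomposition into ample and flat parts $(A \oplus F, \theta)$ of the total Higgs bundle is canonical (it is the Deligne decomposition, intrinsic to the variation of Hodge structure and hence $\gal$-invariant), $\sigma$ must carry the flat unitary part of the $i$-th piece onto the flat unitary part of the $\sigma(i)$-th piece, and likewise for the trivial (constant) sub-local-system $\V^{tr}$, which is again canonically defined as the maximal constant subsystem. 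Second, since an isomorphism of local systems preserves rank, I immediately get $\rk \V^{tr}_{\overline{B},i} = \rk \V^{tr}_{\overline{B},j}$, which is the first asserted equality. Third, for the second equality I would note the complex-conjugation symmetry already recorded in the excerpt, namely that complex conjugation interchanges the index $i$ with $n-i$ and matches $E^{0,1}_{\overline{B},i}$ with $E^{0,1}_{\overline{B},n-i}$; thus the Galois action, together with this conjugation, acts on the \emph{pairs} $\{i, n-i\}$, and the relevant invariant attached to such a pair is precisely the combined rank $\rk (F^{1,0}_{\overline{B},i})^{tr} + \rk (F^{1,0}_{\overline{B},n-i})^{tr}$. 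Since $\sigma$ sends the pair $\{i,n-i\}$ to the pair $\{j,n-j\}$ and is an isomorphism on the corresponding flat bundles, these combined ranks agree, giving the second equality.

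For the prime case, I would simply observe that when $n = p$ is prime, $\gal(\Q(\xi_p)/\Q) \cong (\Z/p\Z)^{\times}$ acts transitively on the nonzero residues $\{1, \dots, p-1\}$, so \emph{all} indices $1 \le i \le p-1$ lie in a single Galois orbit; the general statement then specializes directly to give the displayed equalities for every $1 \le i < j \le p-1$, with no further work.

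The main obstacle, and the step I would scrutinize most carefully, is the claim that the Galois action genuinely commutes with the ample/flat decomposition and with the formation of the trivial subsystem $\V^{tr}$ — in other words, that $\sigma$ really does map $(F^{1,0}_{\overline{B},i})^{tr}$ isomorphically onto $(F^{1,0}_{\overline{B},\sigma(i)})^{tr}$ rather than mixing the flat part with the ample part. The cleanest way to secure this is to emphasize that both the Deligne/Fujita splitting and the maximal constant subsystem are canonical constructions associated to the underlying $\Q$-VHS $\V_B = R^1 f_* \Q$, which carries no $\xi_n$ in its definition; the cyclotomic field enters only through the $G$-eigenspace refinement, so any $\gal(\Q(\xi_n)/\Q)$-automorphism necessarily respects the $\Q$-canonical pieces while permuting the eigen-indices. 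Establishing this compatibility rigorously — rather than as a plausibility argument — is where the real content lies; once it is in hand, the rank equalities are formal consequences of the invariance of rank under isomorphism.
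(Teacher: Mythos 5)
Your first equality and the prime-case specialization are sound, and they follow the same line as the proof the paper points to (the paper's own ``proof'' is just a citation of \cite{CLZ}, Lemma 3.2): the coefficient action of $\gal(\Q(\xi_n)/\Q)$ permutes the eigenspaces, $\sigma(\V_{\overline{B},i})=\V_{\overline{B},\sigma(i)}$, and since the Galois conjugate of a constant subsystem is constant, the maximal trivial subsystems are carried isomorphically onto one another, so $\rk \V^{tr}_{\overline{B},i}=\rk \V^{tr}_{\overline{B},j}$. The problem is your second equality. It rests on the claim that $\sigma$ carries $(F^{1,0}_{\overline{B},i})^{tr}$ isomorphically onto $(F^{1,0}_{\overline{B},\sigma(i)})^{tr}$, i.e.\ that the coefficient-Galois action commutes with the splitting into Hodge types. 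This is false, and no appeal to canonicity can rescue it: the Galois group acts on the Betti object $\V_B\otimes\Q(\xi_n)$, whereas the $(1,0)/(0,1)$ decomposition comes from the Hodge structure, which is transcendental with respect to the rational structure and is not preserved by coefficient automorphisms. Concretely, $\sigma_{-1}\colon\xi_n\mapsto\xi_n^{-1}$ lies in $\gal(\Q(\xi_n)/\Q)$ and sends $i$ to $n-i$, so your claim would force $\rk (F^{1,0}_{\overline{B},i})^{tr}=\rk (F^{1,0}_{\overline{B},n-i})^{tr}$ for every $i$. But after the base change of \cite{VZ04}, Corollary 4.4 one has $F^{1,0}_{\overline{B}}=(F^{1,0}_{\overline{B}})^{tr}$, and Lemma \ref{higgsrek} gives $\rk F^{1,0}_{\overline{B},i}-\rk F^{1,0}_{\overline{B},n-i}=\rk E^{1,0}_{\overline{B},i}-\rk E^{1,0}_{\overline{B},n-i}$, which by Theorem \ref{CW} equals $r(n-2i)/n\neq 0$ for $i\neq n/2$ in the totally ramified case. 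This is precisely why the lemma is stated for the paired sums $\rk(F^{1,0}_{\overline{B},i})^{tr}+\rk(F^{1,0}_{\overline{B},n-i})^{tr}$ rather than the individual summands; your argument, if it worked, would prove a stronger statement that is false.

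The correct route, and essentially what \cite{CLZ} does, is to reduce the second equality to the first. By the theorem of the fixed part, $\V^{tr}_{\overline{B},i}$ underlies a constant sub-Hodge structure on which the Higgs field vanishes, so $\rk \V^{tr}_{\overline{B},i}=\rk (F^{1,0}_{\overline{B},i})^{tr}+\rk (F^{0,1}_{\overline{B},i})^{tr}$. Then conjugation with respect to the \emph{real} structure $\V_B\otimes\R$ (an antilinear operation, not to be confused with the Galois element $\sigma_{-1}$ --- your write-up conflates the two) interchanges Hodge types while sending eigenspace $i$ to eigenspace $n-i$, giving $\rk (F^{0,1}_{\overline{B},i})^{tr}=\rk (F^{1,0}_{\overline{B},n-i})^{tr}$. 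Combining, $\rk(F^{1,0}_{\overline{B},i})^{tr}+\rk(F^{1,0}_{\overline{B},n-i})^{tr}=\rk\V^{tr}_{\overline{B},i}$, so the second displayed equality is literally a restatement of the first, which you have already proved. With this substitution your argument closes; without it, the key step fails.
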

\begin{proof}
This is exactly the same as \cite{CLZ}, Lemma 3.2. Indeed the proof of the aforementioned result is independent of the fact that the fibers are superelliptic and goes through for any family of $n$-cyclic covers of curves. 
\end{proof}

\begin{lemma}\label{higgsrek}
Let $\overline{f}:\overline{S}\to\overline{B}$ be a family of semi-stable $G$-covers representing a Shimura curve $C$ in the Torelli locus $\sT_g$. Then
\begin{equation}\label{rkeqs}
\rk A^{1,0}_{\overline{B},i}=\rk A^{0,1}_{\overline{B},i}=\rk A^{1,0}_{\overline{B},n-i} \forall 1\leq i\leq n-1
\end{equation}
For $i=0$, we have
\begin{equation}
\rk A^{1,0}_{\overline{B},0}=\rk A^{0,1}_{\overline{B},0}
\end{equation}
In particular, 
\begin{gather}
\rk F^{1,0}_{\overline{B},i}\neq 0 \hspace{2cm}\text{ if        }\rk E^{1,0}_{\overline{B},i}>\rk E^{1,0}_{\overline{B},n-i};\label{rknzero}\\
\rk F^{1,0}_{\overline{B},n-i}\geq \rk F^{1,0}_{\overline{B},i} \hspace{2cm}\text{      for        } i\geq n/2 \label{rkgreater}
\end{gather}
\end{lemma}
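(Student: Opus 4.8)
The plan is to deduce \eqref{rkeqs} from two symmetries of the $G$-refined Higgs bundle and then to read off \eqref{rknzero} and \eqref{rkgreater} as numerical consequences, using the Chevalley--Weil dimensions $\rk E^{1,0}_{\overline{B},i}=h_i$ from Theorem \ref{CW}. First I would use that, since $C$ is a Shimura curve and the family has no hyperelliptic fibers, the Arakelov equality characterising Shimura curves forces the Higgs field on the ample part to be maximal: $\theta_{\overline{B}}|_{A^{1,0}_{\overline{B}}}\colon A^{1,0}_{\overline{B}}\to A^{0,1}_{\overline{B}}\otimes\Omega^1_{\overline{B}}(\log\Delta_{nc})$ is an isomorphism \cite{VZ04}. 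Because the $G$-action is fibrewise over $\overline{B}$, the base form $\Omega^1_{\overline{B}}(\log\Delta_{nc})$ carries the trivial character, so $\theta_{\overline{B}}$ is $G$-equivariant and maps $A^{1,0}_{\overline{B},i}$ into $A^{0,1}_{\overline{B},i}\otimes\Omega^1_{\overline{B}}(\log\Delta_{nc})$. The global isomorphism therefore restricts to an isomorphism on each eigenspace, giving $\rk A^{1,0}_{\overline{B},i}=\rk A^{0,1}_{\overline{B},i}$ for all $i$, which in particular settles the asserted identity at $i=0$.

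Next I would bring in the real structure. The maximal flat (unitary) subsystem, and hence the splitting of the Higgs bundle into its ample part $A$ and its flat part $F$, is canonical and so stable under complex conjugation on $\V_B\otimes\C$. Conjugation interchanges the $(1,0)$ and $(0,1)$ pieces and sends the $\chi_i$-eigenspace to the $\chi_{n-i}$-eigenspace, whence $\overline{A^{1,0}_{\overline{B},i}}=A^{0,1}_{\overline{B},n-i}$ and $\rk A^{1,0}_{\overline{B},i}=\rk A^{0,1}_{\overline{B},n-i}$. Rewriting the right-hand side by the maximality identity of the first step applied at the index $n-i$ gives $\rk A^{1,0}_{\overline{B},i}=\rk A^{0,1}_{\overline{B},i}=\rk A^{1,0}_{\overline{B},n-i}$, which is precisely \eqref{rkeqs}.

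For the two ``in particular'' assertions I would subtract the relation $\rk A^{1,0}_{\overline{B},i}=\rk A^{1,0}_{\overline{B},n-i}$ of \eqref{rkeqs} from $\rk E^{1,0}_{\overline{B},i}=\rk A^{1,0}_{\overline{B},i}+\rk F^{1,0}_{\overline{B},i}$, obtaining the identity $\rk F^{1,0}_{\overline{B},i}-\rk F^{1,0}_{\overline{B},n-i}=\rk E^{1,0}_{\overline{B},i}-\rk E^{1,0}_{\overline{B},n-i}$. Then \eqref{rknzero} is immediate: the hypothesis $\rk E^{1,0}_{\overline{B},i}>\rk E^{1,0}_{\overline{B},n-i}$ makes the right-hand side positive, so $\rk F^{1,0}_{\overline{B},i}>\rk F^{1,0}_{\overline{B},n-i}\ge 0$. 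The same identity reduces \eqref{rkgreater}, i.e.\ $\rk F^{1,0}_{\overline{B},n-i}\ge\rk F^{1,0}_{\overline{B},i}$ for $i\ge n/2$, to the inequality $h_{n-i}\ge h_i$ on Chevalley--Weil numbers; in the totally ramified case Theorem \ref{CW} gives $h_i=g-1+r(1-\tfrac{i}{n})$, hence $h_{n-i}-h_i=r(\tfrac{2i}{n}-1)\ge 0$ exactly when $i\ge n/2$.

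I expect the real content to lie in the two structural inputs above — that the Higgs field is maximal on the ample part and stays maximal eigenspace by eigenspace, and that the ample/flat splitting is compatible with the real structure on $\V_B\otimes\C$ — both of which rest on Viehweg--Zuo theory \cite{VZ04} and the Shimura characterisation; once they are secured, \eqref{rknzero} and \eqref{rkgreater} are elementary bookkeeping with the Chevalley--Weil dimensions, in the spirit of \cite{CLZ}, Lemma 3.3. The one point to watch is that the reduction of \eqref{rkgreater} genuinely needs the monotonicity $h_{n-i}\ge h_i$ for $i\ge n/2$, so I would make sure the covers under consideration lie in the (totally ramified) range where the simplified Chevalley--Weil formula applies.
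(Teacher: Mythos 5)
Your proposal is correct and follows essentially the same route as the paper: the Higgs field on the ample part is an isomorphism by the Viehweg--Zuo characterisation of Shimura curves, it respects the $G$-eigenspace decomposition to give $\rk A^{1,0}_{\overline{B},i}=\rk A^{0,1}_{\overline{B},i}$, complex conjugation supplies $\rk A^{0,1}_{\overline{B},i}=\rk A^{1,0}_{\overline{B},n-i}$, and the two "in particular" claims follow from the resulting identity $\rk F^{1,0}_{\overline{B},i}-\rk F^{1,0}_{\overline{B},n-i}=\rk E^{1,0}_{\overline{B},i}-\rk E^{1,0}_{\overline{B},n-i}$ together with the Chevalley--Weil numbers. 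Your closing caveat about needing the totally ramified form of Chevalley--Weil for \eqref{rkgreater} is well taken and is also implicit in the paper, which computes $\rk E^{1,0}_{\overline{B},n-i}-\rk E^{1,0}_{\overline{B},i}=r(2i-n)/n$ using exactly that case.
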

\begin{proof}
Since $C$ is a Shimura curve, there is a decomposition 
\[(E^{1,0}_{\overline{C}}\oplus E^{0,1}_{\overline{C}}, \theta_{\overline{C}})=(A^{1,0}_{\overline{C}}\oplus A^{0,1}_{\overline{C}}, \theta_{\overline{C}}|_{A^{1,0}_{\overline{C}}})\oplus (F^{1,0}_{\overline{C}}\oplus F^{0,1}_{\overline{C}}, 0)\]
in which the Higgs field $\theta_{\overline{C}}|_{A^{1,0}_{\overline{C}}}$ is an isomorphism. Note that $(A^{1,0}_{\overline{B}}\oplus A^{0,1}_{\overline{B}}, \theta_{\overline{B}}|_{A^{1,0}_{\overline{B}}})$ is the pull-back of $(A^{1,0}_{\overline{C}}\oplus A^{0,1}_{\overline{C}}, \theta_{\overline{C}}|_{A^{1,0}_{\overline{C}}})$ under an isomorphism and hence $\theta_{\overline{B}}|_{A^{1,0}_{\overline{B}}}$ must also be an isomorphism. Since the Higgs field respects the eigenspace decomposition, by restricting to $(A^{1,0}_{\overline{B}}\oplus A^{0,1}_{\overline{B}}, \theta_{\overline{B}}|_{A^{1,0}_{\overline{B}}})_i$ we obtain that $\theta_{\overline{B}}|_{A^{1,0}_{\overline{B},i}}$ is an isomorphism. This proves that 
$\rk A^{1,0}_{\overline{B},i}=\rk A^{0,1}_{\overline{B},i}$ as claimed. The second equality follows by complex conjugating. Furthermore, \ref{rkeqs} implies that 
\[\rk E^{1,0}_{\overline{B},i}-\rk E^{1,0}_{\overline{B},n-i}=\rk F^{1,0}_{\overline{B},i}-\rk F^{1,0}_{\overline{B},n-i}, 1\leq i\leq n-1\]
or equivalently,
\[\rk F^{1,0}_{\overline{B},i}=\rk F^{1,0}_{\overline{B},n-i}+(\rk E^{1,0}_{\overline{B},i}-\rk E^{1,0}_{\overline{B},n-i}),  1\leq i\leq n-1\]
from which \ref{rknzero} follows. Finally, for $i\geq n/2$, again using the above equality together with Theorem \ref{CW} yields
\[ \rk F^{1,0}_{\overline{B},n-i}-\rk F^{1,0}_{\overline{B},i}=\rk E^{1,0}_{\overline{B},n-i}-\rk E^{1,0}_{\overline{B},i}=\frac{r(2i-n)}{n}\geq 0\]
which implies \ref{rkgreater}. 
\end{proof}

\begin{lemma}\label{second fib}
Let $\overline{f}:\overline{S}\to\overline{B}$ be a family of semi-stable $G$-covers representing a non-compact Shimura curve $C$ in the Torelli locus $\sT_g$ and either
\begin{enumerate} 
\item $|G|=n\geq 4$ is an even number, or
\item $|G|=n\geq 3$ is an odd number
\end{enumerate}
Further, assume that after a suitable base change of $\overline{B}$, there exists a fibration $\overline{f}^{\prime}:\overline{S}\to\overline{B}^{\prime}$ such that $g(\overline{B}^{\prime})\geq  \rk F^{1,0}_{\overline{B}} $. Then $g<8$. 
\end{lemma}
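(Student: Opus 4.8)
The plan is to derive the bound by playing the irregularity $q(\overline{S})=h^1(\overline{S},\sO_{\overline{S}})$ off against the flat part of the Higgs bundle, exploiting the fact that $\overline{S}$ now carries \emph{two} fibrations. First I would observe that, since $\overline{f}$ and $\overline{f}'$ are genuinely distinct fibrations, the pulled-back $1$-forms $\overline{f}^{*}H^0(\overline{B},\Omega^1_{\overline{B}})$ and $\overline{f}'^{*}H^0(\overline{B}',\Omega^1_{\overline{B}'})$ span independent subspaces of $H^0(\overline{S},\Omega^1_{\overline{S}})$, so that $q(\overline{S})\ge g(\overline{B})+g(\overline{B}')$. (If the two fibrations coincided we would have $\overline{B}'\cong\overline{B}$ and the argument only simplifies, the hypothesis becoming $g(\overline{B})\ge\rk F^{1,0}_{\overline{B}}$.)

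On the other hand the \emph{same} invariant is computed by the first fibration as $q(\overline{S})=g(\overline{B})+\big(q(\overline{S})-g(\overline{B})\big)$, where the relative irregularity $q(\overline{S})-g(\overline{B})$ is exactly the dimension of the fixed part of the relative Jacobian, i.e. the rank of the constant subsystem $\V^{tr}_{\overline{B}}\subseteq\V_{\overline{B}}$; by the global invariant cycle theorem this equals $\rk (F^{1,0}_{\overline{B}})^{tr}$. Since the trivial subsystem lies inside the flat one, $\rk (F^{1,0}_{\overline{B}})^{tr}\le \rk F^{1,0}_{\overline{B}}$. Comparing with the hypothesis $g(\overline{B}')\ge\rk F^{1,0}_{\overline{B}}$ and the inequality of the first paragraph forces
\[ g(\overline{B}')\ \le\ \rk (F^{1,0}_{\overline{B}})^{tr}\ \le\ \rk F^{1,0}_{\overline{B}}\ \le\ g(\overline{B}) ' \]
to be a chain of equalities. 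Two consequences are decisive: the unitary local system is in fact \emph{trivial}, $(F^{1,0}_{\overline{B}})^{tr}=F^{1,0}_{\overline{B}}$, and $g(\overline{B}')=\rk F^{1,0}_{\overline{B}}$. Thus the fixed part of the family of Jacobians is, up to isogeny, $J(\overline{B}')$, realised by a $G$-equivariant morphism $\overline{F}_t\to\overline{B}'$ from every smooth fibre onto the fixed curve $\overline{B}'$, and $\overline{B}'\to\overline{B}'/G$ is itself a $G$-cover.

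Next I would translate this into the eigenspace bookkeeping. Writing $e_i=\rk E^{1,0}_{\overline{B},i}$, $a_i=\rk A^{1,0}_{\overline{B},i}$, $f_i=\rk F^{1,0}_{\overline{B},i}$, so that $e_i=a_i+f_i$ and $g=\sum_i e_i$, the $e_i$ are the Chevalley--Weil numbers of the generic cover $\overline{F}_t\to\overline{F}_t/G$ of Theorem \ref{CW}, while the $f_i$ are now the Chevalley--Weil numbers of the genuine $G$-cover $\overline{B}'\to\overline{B}'/G$ with $\sum_i f_i=g(\overline{B}')$. The ample multiplicities $a_i=e_i-f_i$ obey Lemma \ref{higgsrek} ($a_i=a_{n-i}$), and since $C$ is a non-compact Shimura curve the restricted Higgs field $\theta_{\overline{B}}|_{A^{1,0}_{\overline{B},i}}$ is an isomorphism onto $A^{0,1}_{\overline{B},i}\otimes\Omega^1_{\overline{B}}(\log\Delta_{nc})$; combined with the non-compact identity $g(\overline{F})=\rk F^{1,0}_{\overline{B}}$, which says that over each cusp the entire ample part consists of vanishing cycles and hence the local monodromy is maximally unipotent on $A^{1,0}_{\overline{B}}$, this maximal–Higgs–field condition pins each nonzero $a_i$ to be $1$ and, through the Chevalley--Weil compatibility of the two $G$-covers $\overline{F}_t\to\overline{F}_t/G$ and $\overline{B}'\to\overline{B}'/G$, forces all but boundedly many $a_i$ to vanish.

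Finally I would sum $g=\sum_i(a_i+f_i)=\rk A^{1,0}_{\overline{B}}+g(\overline{B}')$ and read off the numerical bound. The two cases of the statement enter precisely here: for odd $n$ every index pairs off as $\{i,n-i\}$ with $a_i=a_{n-i}$, whereas for even $n$ the self-conjugate eigenspace $i=n/2$, on which $\rk A^{1,0}_{\overline{B},n/2}=\rk A^{0,1}_{\overline{B},n/2}$ is forced directly, must be counted separately; carrying out this parity bookkeeping yields $g<8$. The hard part is exactly this last quantitative step: one must convert the abstract ``maximal Higgs field plus maximal unipotent degeneration at the cusps'' into the sharp statement $a_i\le 1$ \emph{together with} a bound on the number of contributing eigenspaces that stays uniform as $n\to\infty$. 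The genuine danger is that a naive eigenspace count grows with $n$, so the decisive input is not merely $a_i\le 1$ but the Chevalley--Weil comparison forced by the \emph{fixed} curve $\overline{B}'$ sharing the group $G$, which is what collapses the count to a constant and produces the value $8$.
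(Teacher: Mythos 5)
Your first two paragraphs are essentially sound and even go beyond what the paper establishes: the independence of forms pulled back from two distinct fibrations, plus the identification of the relative irregularity with the rank of the constant subsystem, do force the chain of equalities and in particular $g(\overline{B}')=\rk F^{1,0}_{\overline{B}}$. But from that point on the proof has a genuine gap: the entire quantitative conclusion $g<8$ is delegated to an ``eigenspace bookkeeping'' that is never carried out, and the two assertions it rests on are unjustified. The maximal Higgs field condition says only that $\theta_{\overline{B}}|_{A^{1,0}_{\overline{B},i}}:A^{1,0}_{\overline{B},i}\to A^{0,1}_{\overline{B},i}\otimes\Omega^1_{\overline{B}}(\log\Delta_{nc})$ is an isomorphism; it imposes no bound whatsoever on $\rk A^{1,0}_{\overline{B},i}$, so ``each nonzero $a_i$ equals $1$'' is false as stated, and ``all but boundedly many $a_i$ vanish'' is nowhere argued. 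As you yourself concede, without a bound uniform in $n$ the sum $g=\sum_i(a_i+f_i)$ proves nothing, so your proposal stops exactly where the difficulty begins. A telltale sign is that the hypotheses (1), (2) on the parity and size of $n$, and the assumption $g\geq 8$, never enter any computation in your argument. (Your parenthetical claim that the argument ``only simplifies'' if the two fibrations coincide is also wrong---the chain of inequalities collapses in that case---but this is minor by comparison.)

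The paper's own proof closes the argument with two short steps, both of which your setup already makes available. First, restrict $\overline{f}'$ to a fiber $\overline{F}$ lying over a point of $\Delta_{nc}$: non-isotriviality of $\overline{f}$ gives $\deg(\overline{f}'|_{\overline{F}})\geq 2$, and non-compactness gives $g(\overline{F})=\rk F^{1,0}_{\overline{B}}$, so Riemann--Hurwitz, $2g(\overline{F})-2\geq 2\bigl(2g(\overline{B}')-2\bigr)$, combined with the hypothesis $g(\overline{B}')\geq\rk F^{1,0}_{\overline{B}}=g(\overline{F})$, forces $g(\overline{F})\leq 1$, i.e.\ $\rk F^{1,0}_{\overline{B}}\leq 1$. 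Second---and this is the step missing entirely from your proposal---one needs a \emph{lower} bound on $\rk F^{1,0}_{\overline{B}}$: by Lemma \ref{higgsrek} and the Chevalley--Weil formula of Theorem \ref{CW}, $\rk F^{1,0}_{\overline{B}}\geq\sum_{i=1}^{\lfloor n/2\rfloor}\bigl(\rk E^{1,0}_{\overline{B},i}-\rk E^{0,1}_{\overline{B},i}\bigr)$, which equals $r(n-2)/4$ for $n$ even and $r(n-1)^2/4n$ for $n$ odd, and under hypotheses (1), (2) together with $g\geq 8$ both quantities exceed $2$. The contradiction between the two bounds is the proof, and it is here---not in any rank count on the ample part---that the parity hypotheses and $g\geq 8$ do their work.
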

\begin{proof}
Assume that such a fibration $\overline{f}^{\prime}$ with the above properties exists but $g\geq 8$. Then restricting $\overline{f}^{\prime}$ to an arbitrary fiber $\overline{F}$ of $\overline{f}$ we get a morphism $\overline{f}^{\prime}|_{\overline{F}}:\overline{F}\to \overline{B}^{\prime}$. Note that $\deg(\overline{f}^{\prime}|_{\overline{F}})$ does not depend on choice of the fiber $\overline{F}$. Since the fibration $\overline{f}$ is non-isotrivial, we have that $\deg(\overline{f}^{\prime}|_{\overline{F}})\geq 2$. Consequently, by the Riemann-Hurwitz formula we have
\begin{equation}\label{RH}
2g(\overline{F})-2\geq 2(2g(\overline{B}^{\prime})-2)
\end{equation}
Furthermore, by Lemma ~\ref{higgsrek} and Theorem ~\ref{CW}, we have
\begin{equation}\label{ineqrk}
\rk F^{1,0}_{\overline{B}}\geq\displaystyle\sum_{i=1}^{\lfloor n/2\rfloor}\rk F^{1,0}_{\overline{B},i}\geq\sum_{i=1}^{\lfloor n/2\rfloor}(\rk E^{1,0}_{\overline{B},i}-\rk E^{0,1}_{\overline{B},i})=\begin{cases}
\frac{r(n-2)}{4} & \text{ if $n$ is even }\\
\frac{r(n-1)^2}{4n} & \text{ if $n$ is odd }
\end{cases}
\end{equation}
Now the above assumptions (1) and (2) on $n$ and the fact that $g\geq 8$ imply that both of the numbers in the last equality in \ref{ineqrk} are greater than $2$. Since $C$ is non-compact, if in ~\ref{RH} we choose a fiber $\overline{F}$ over $\Delta_{nc}$ we get a contradiction to [] because $g(\overline{B}^{\prime})\geq  \rk F^{1,0}_{\overline{B}}\geq 2$. 
\end{proof}

\begin{remcor}\label{second fib big n}
\begin{enumerate} 
\item From the proof of Lemma \ref{second fib}, one sees that the result readily follows if $\rk F^{1,0}_{\overline{B}}\geq 2$. In fact this is a more powerful statement generalizing many other results in this direction. More precisely, the author has proved in \cite{M} that if a family of abelian covers of $\P^1$ has a eigenspace $\sL_{\chi}$ such that $d_{\chi}\geq 2$  and $d_{\chi^{-1}}\geq 2$, then the subvariety arising from this family is not Shimura. More results in this direction can also be found in the paper \cite{F} by P. Frediani. Also Moonen proves similar results about cyclic covers of $\P^1$, see \cite{M1}, (7.3).   
\item  If $n\geq 10$, then by the calculations in ~\ref{ineqrk}, it holds that $\rk F^{1,0}_{\overline{B}}\geq 2$. Consequently, Lemma \ref{second fib} holds if $n\geq 10$. 
\end{enumerate}
\end{remcor}

Let $\overline{f}:\overline{S}\to\overline{B}$ be a family of semi-stable $\Z_n$-covers of curves. We will assume that the $\Z_n$-action extends to $\overline{S}$ and remark that this can always be achieved using suitable finite covers. The Galois cover $\overline{\Pi}:\overline{S}\to\overline{Y}$ induces a Galois cover $\overline{\Pi}^{\prime}:S^{\prime}\to\widetilde{Y}$ whose branch locus is a normal crossing divisor and with $\gal(\overline{\Pi}^{\prime})\cong G$, where $\widetilde{Y}\to\overline{Y}$ is the minimal resolution of singularities. Let $\widetilde{S}\to S^{\prime}$ be the minimal resolution of
singularities. Then there is an induced birational contraction $\widetilde{S}\to \overline{S}$. Since $\Pi^{\prime}$ is a $\Z_n$-cover of surfaces, it is defined by a relation $\sL^n\equiv\sO_Y(R)$. \\

The following Lemma has been proven in \cite{CLZ}, Lemma 4.9 for general families of covers

\begin{lemma}\label{trivial base change}
Let $\widetilde{R}\subseteq\widetilde{Y}$ be the reduced branch divisor of $\Pi^{\prime}$ as above, and $\widetilde{\Gamma}$ be a general fiber of $\widetilde{\varphi}$. If $\widetilde{R}$ contains at least one section of $\widetilde{\varphi}$, and there exist $1\leq i_1\leq i_2\leq p-1$ such that $\rk F^{1,0}_{\overline{B}, i_1}\neq 0, \rk F^{1,0}_{\overline{B}, i_2}\neq 0$ and
\begin{equation}\label{inter}
\widetilde{\Gamma}\cdotp(\omega_{\widetilde{Y}}(\widetilde{R})\otimes ({\mathcal{L}^{(i_1)}}^{-1}\otimes {\mathcal{L}^{(i_2)}}^{-1}))<0
\end{equation}
Then both $F^{1,0}_{\overline{B}, i_1}, F^{1,0}_{\overline{B}, i_2}$ become trivial after a suitable finite \'etale base change.
\end{lemma}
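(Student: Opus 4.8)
The plan is to establish that the hypothesis forces the relevant eigenspace pieces of the Higgs field to vanish after pulling back along an étale base change, by exploiting a positivity/negativity dichotomy coming from the intersection number condition (\ref{inter}). Concretely, the flat subbundle $F^{1,0}_{\overline{B}}$ together with its eigenspace pieces $F^{1,0}_{\overline{B},i}$ corresponds, under the correspondence between Higgs subbundles and local subsystems recalled in Section 3.2, to a unitary (flat) local subsystem. The content of the lemma is that under the stated numerical hypotheses the pieces $F^{1,0}_{\overline{B},i_1}$ and $F^{1,0}_{\overline{B},i_2}$ actually become \emph{trivial}, not merely unitary, after passing to a finite étale cover of the base.

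\textbf{Step 1: Translate the Higgs-bundle data into line-bundle data on $\widetilde{Y}$.} Since $\Pi^{\prime}\colon S^{\prime}\to\widetilde{Y}$ is the $\Z_n$-cover defined by the relation $\sL^n\equiv\sO_Y(R)$, the eigenspace decomposition of the direct image is governed by the line bundles $\sL^{(i)}=\sL^i(-\sum\lfloor\frac{iu_k}{n}\rfloor P_k)$ introduced in Section 2. The piece $E^{1,0}_{\overline{B},i}$ (and hence $A^{1,0}_{\overline{B},i}$ and its complement $F^{1,0}_{\overline{B},i}$) is computed from $\overline{f}_*$ of the $i$-th eigensheaf, which is expressible through $\omega_{\widetilde{Y}/\overline{B}}$ twisted by ${\sL^{(i)}}^{-1}$. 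First I would make this identification precise so that the intersection number $\widetilde{\Gamma}\cdotp(\omega_{\widetilde{Y}}(\widetilde{R})\otimes {\sL^{(i_1)}}^{-1}\otimes {\sL^{(i_2)}}^{-1})$ acquires a Higgs-theoretic meaning as the degree of a certain line bundle restricted to a general fiber.

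\textbf{Step 2: Use the section hypothesis and negativity to force triviality.} The assumption that $\widetilde{R}$ contains a section of $\widetilde{\varphi}$ is what lets one read off the restriction of the relevant line bundles to the general fiber $\widetilde{\Gamma}$ and convert the global intersection inequality (\ref{inter}) into a statement about degrees on each fiber. Following the argument of \cite{CLZ}, Lemma 4.9, the strict negativity in (\ref{inter}) combined with the flatness (unitarity) of $F^{1,0}_{\overline{B},i_1}\oplus F^{1,0}_{\overline{B},i_2}$ yields that the associated subsystem has nonpositive degree, while unitarity forces degree zero; a bundle that is simultaneously flat and of degree zero with the numerical constraints imposed must be trivial, i.e.\ have a finite monodromy image. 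Finite monodromy is exactly what is trivialized by a finite étale base change.

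\textbf{The main obstacle} I anticipate lies in Step 2, specifically in carefully controlling how the eigenspace pieces interact under the $\gal(\Q(\xi_n)/\Q)$-action and the complex conjugation symmetry $\rk F^{1,0}_{\overline{B},i}=\rk F^{1,0}_{\overline{B},n-i}+(\cdots)$ from Lemma \ref{higgsrek}, so that the two indices $i_1,i_2$ genuinely correspond to pieces whose joint degree is controlled by the single intersection number in (\ref{inter}). Since the statement asserts the cited proof of \cite{CLZ}, Lemma 4.9 carries over verbatim, the essential verification is that nowhere in that argument is the superelliptic (i.e.\ $C^{\prime}=\P^1$) hypothesis used: the entire computation takes place on the ruled-like surface $\widetilde{Y}\to\overline{B}$ and depends only on the existence of a section in $\widetilde{R}$ and on the cyclic structure of the cover, both of which persist for an arbitrary base curve. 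I would therefore spend most of the effort confirming that the fibration $\widetilde{\varphi}$ and the line bundles $\sL^{(i)}$ behave identically in the general-base setting, and then invoke the negativity-implies-triviality mechanism directly.
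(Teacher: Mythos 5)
Your central mechanism in Step 2 is a non-sequitur, and it sits exactly where the real content of the lemma lies. A flat (unitary) subbundle has degree zero \emph{automatically}: there is nothing for the negativity in (\ref{inter}) to improve upon, and ``flat of degree zero'' does not imply triviality after a finite \'etale base change. Triviality after finite \'etale base change is equivalent to \emph{finite} monodromy, and unitary local systems of degree zero with infinite monodromy abound: any unitary character of $\pi_1(\overline{B})$ of infinite order gives a rank-one flat subbundle of degree zero that never becomes trivial under any finite cover. This is precisely the situation the lemma must exclude, and a degree count cannot distinguish it from the trivial system. Symptomatically, your sketch makes no essential use of the hypotheses $\rk F^{1,0}_{\overline{B},i_1}\neq 0$, $\rk F^{1,0}_{\overline{B},i_2}\neq 0$, nor of the requirement that $\widetilde{R}$ contain a section of $\widetilde{\varphi}$; the intersection number of a line bundle with a general fiber $\widetilde{\Gamma}$ is computed the same way with or without a section, so ``reading off restrictions to the fiber'' is not what that hypothesis is for. (There is also a sign slip in Step 1: the eigensheaves of the pushforward of the relative dualizing sheaf involve $\sL^{(i)}$, not ${\sL^{(i)}}^{-1}$; it is the pushforward of $\sO$ that decomposes into the ${\sL^{(i)}}^{-1}$.)

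For calibration: the paper itself offers no argument for this statement; it is quoted from \cite{CLZ}, Lemma 4.9, with the remark that the proof there never uses the superelliptic ($\P^1$-base) hypothesis, so your attempt has to be measured against reconstructing that argument. In \cite{CLZ} the negativity (\ref{inter}) enters in a genuinely different way than you propose: a line bundle on $\widetilde{Y}$ whose restriction to a general fiber has negative degree admits no nonzero global sections, and this vanishing is applied to the eigensheaves through which products of eigenforms (equivalently, the relevant cup products or wedge products on $\overline{S}$) are computed. That is the same mechanism visible in the companion statement quoted later in the paper from \cite{CLZ}, Lemma 4.12, where the vanishing of all wedges $\omega_1\wedge\omega_2$ feeds into a Castelnuovo--de Franchis argument producing a second fibration. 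The triviality of $F^{1,0}_{\overline{B},i_1}$ and $F^{1,0}_{\overline{B},i_2}$ is then extracted by combining this geometric vanishing with the correspondence between flat subbundles and global $1$-forms on the surface $\overline{S}$, together with the section contained in $\widetilde{R}$ and the Galois-conjugation symmetries of the eigenspace decomposition --- not from flatness-plus-degree-zero. If you want to prove the lemma rather than cite it, the vanishing-of-sections route is the one to develop; as written, your Step 2 would ``prove'' that every unitary subsystem is trivial, which is false.
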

In the following result and thereafter we assume that we have a family of $\Z_p$-covers of elliptic curves. Later on we will generalize this result partially for any families of $\Z_p$-covers of curves of any genus $s>1$, see Theorems \ref{non-comp p gen} and \ref{non-comp n gen}. 

\begin{lemma}\label{inter neg}
Suppose that our family is a family of $\Z_p$-covers of elliptic curves. Assume also that $\widetilde{R}$ contains at least one section of $\widetilde{\varphi}$ and that $\rk F^{1,0}_{\overline{B},i_0}\neq 0$ for some $i_0\geq p+1/2$. Then after a suitable unramified base change, $F^{1,0}_{\overline{B},i}$ becomes trivial for any $p-i_0+1\leq i< i_0$. 
\begin{proof}
First of all note that $\widetilde{\Gamma}\cdotp\omega_{\widetilde{Y}}(\widetilde{R})= \deg \widetilde{R}|_{\widetilde{\Gamma}}$. Therefoere by definition of $\sL$, it follows that 
\[\widetilde{\Gamma}\cdotp(\omega_{\widetilde{Y}}(\widetilde{R})\otimes ({\mathcal{L}^{(i)}}^{-1}\otimes {\mathcal{L}^{(i_0)}}^{-1}))<0, \forall p-i_0+1\leq i< i_0.\]
The claim then follows from Lemma ~\ref{trivial base change}. 
\end{proof}
\end{lemma}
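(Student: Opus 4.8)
The plan is to reduce to Lemma \ref{trivial base change} by verifying its hypotheses for each index $i$ in the asserted range. The target statement is that for a family of $\Z_p$-covers of elliptic curves, under the assumption that $\widetilde{R}$ contains a section of $\widetilde{\varphi}$ and that $\rk F^{1,0}_{\overline{B},i_0}\neq 0$ for some $i_0$ with $i_0\geq (p+1)/2$, each bundle $F^{1,0}_{\overline{B},i}$ becomes trivial after an étale base change for $p-i_0+1\leq i<i_0$. The key observation is that Lemma \ref{trivial base change} is exactly the machine that converts a negative intersection number into triviality, so the entire work is to exhibit the correct negativity.

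First I would compute the intersection number $\widetilde{\Gamma}\cdotp\omega_{\widetilde{Y}}(\widetilde{R})$. Since the base curve is an elliptic curve, the fibers $\widetilde{\Gamma}$ of $\widetilde{\varphi}$ have genus one, so $\omega_{\widetilde{Y}}$ restricts to degree $0$ on a general fiber; hence $\widetilde{\Gamma}\cdotp\omega_{\widetilde{Y}}(\widetilde{R})=\deg\widetilde{R}|_{\widetilde{\Gamma}}$, which is the claim made at the start of the proof. Because $\widetilde{R}$ contains at least one section, this degree is at least $1$, but more importantly I need to control it against the line bundles $\sL^{(i)}$. Here I would use the Chevalley-Weil data: for a totally ramified $\Z_p$-cover the eigenbundle degrees are governed by the fractional parts $\langle -iu_k/p\rangle$, and the defining relation $\sL^p\equiv\sO_Y(R)$ together with $\sL^{(i)}=\sL^i(-\sum\lfloor iu_k/p\rfloor P_k)$ lets me express $\widetilde{\Gamma}\cdotp(\sL^{(i)})^{-1}$ in terms of $\deg\widetilde{R}|_{\widetilde{\Gamma}}$ and these fractional parts.

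Next I would assemble the three factors. The quantity of interest is
\[
\widetilde{\Gamma}\cdotp\bigl(\omega_{\widetilde{Y}}(\widetilde{R})\otimes(\sL^{(i)})^{-1}\otimes(\sL^{(i_0)})^{-1}\bigr),
\]
and the goal is to show it is negative for every $i$ in the range $p-i_0+1\leq i<i_0$. The arithmetic heart is the elementary inequality on fractional parts: for $i+i_0$ lying in the relevant residue window modulo $p$, the sum $\langle -iu_k/p\rangle+\langle -i_0 u_k/p\rangle$ exceeds the single contribution from $\omega_{\widetilde{Y}}(\widetilde{R})$, forcing the tensor bundle to have negative fiber-degree. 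The condition $i_0\geq (p+1)/2$ and $i\geq p-i_0+1$ is precisely what guarantees $i+i_0\geq p+1$, so that the fractional parts add up past one and the contributions from $(\sL^{(i)})^{-1}$ and $(\sL^{(i_0)})^{-1}$ dominate. Once \eqref{inter} is established with $i_1=i$ and $i_2=i_0$, Lemma \ref{trivial base change} applies—since $\rk F^{1,0}_{\overline{B},i_0}\neq 0$ by hypothesis and $\rk F^{1,0}_{\overline{B},i}\neq 0$ would follow or can be arranged—and yields triviality of $F^{1,0}_{\overline{B},i}$ after an étale base change.

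The main obstacle I anticipate is the bookkeeping of the fractional-part inequality across the whole index range simultaneously, together with checking the nonvanishing hypothesis $\rk F^{1,0}_{\overline{B},i}\neq 0$ required by Lemma \ref{trivial base change}. For the genus-one base the $\omega_{\widetilde{Y}}$ contribution is mercifully trivial, which is what makes the elliptic case clean; the delicate point is ensuring the section hypothesis on $\widetilde{R}$ translates into a sharp enough lower bound on $\deg\widetilde{R}|_{\widetilde{\Gamma}}$ and that no boundary index in the range fails the strict inequality. I would handle this by treating the inequality uniformly in $i$ and appealing to the monotonicity of $\langle -iu_k/p\rangle$ as $i$ ranges over the window, reducing every case to the extremal one $i=p-i_0+1$.
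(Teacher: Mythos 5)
Your proposal is correct and follows essentially the same route as the paper's own proof: identify $\widetilde{\Gamma}\cdot\omega_{\widetilde{Y}}(\widetilde{R})=\deg\widetilde{R}|_{\widetilde{\Gamma}}$ (trivial canonical contribution since the fibers are elliptic), deduce negativity of $\widetilde{\Gamma}\cdot\bigl(\omega_{\widetilde{Y}}(\widetilde{R})\otimes({\mathcal{L}^{(i)}})^{-1}\otimes({\mathcal{L}^{(i_0)}})^{-1}\bigr)$ from the eigenbundle degrees (your fractional-part computation, with $i+i_0\geq p+1$ in the totally ramified case, is exactly the content of the paper's terse ``by definition of $\sL$''), and then invoke Lemma \ref{trivial base change}. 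The only loose phrase is that $\rk F^{1,0}_{\overline{B},i}\neq 0$ ``can be arranged''---the clean resolution, which the paper also glosses over, is that if $\rk F^{1,0}_{\overline{B},i}=0$ the conclusion is vacuous, and otherwise both nonvanishing hypotheses of Lemma \ref{trivial base change} hold.
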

The following lemma, follows from the general proof of Lemma 4.12 of \cite{CLZ}.
\begin{lemma}\label{inter neg}
Assume that there exist $1\leq i_1\leq i_2\leq p-1$, such that
\begin{equation}\label{inter neg0}
\widetilde{\Gamma}\cdotp(\omega_{\widetilde{Y}}(\widetilde{R})\otimes ({\mathcal{L}^{(i_1)}}^{-1}\otimes {\mathcal{L}^{(i_2)}}^{-1}))<0
\end{equation}
and such that $H^0(\overline{S}, \Omega^1_{\overline{S}})_{i_1}\neq 0$ and $H^0(\overline{S}, \Omega^1_{\overline{S}})_{i_2}\neq 0$. Then there exists a unique fibration $\overline{f}^{\prime}:\overline{S}\to \overline{B}^{\prime}$ such that 
\begin{equation}\label{pull-back gen}
H^0(\overline{S}, \Omega^1_{\overline{S}})_{i}\subseteq (\overline{f}^{\prime})^* H^0(\overline{B}^{\prime}, \Omega^1_{\overline{B}^{\prime}}) \text{ for } i=i_1, i_2
\end{equation}
\end{lemma}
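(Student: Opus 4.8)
The plan is to read the conclusion as an instance of the Castelnuovo--de Franchis theorem, and to use the numerical hypothesis \eqref{inter neg0} only to force a wedge of eigenforms to vanish. Fix nonzero eigenforms $\omega_1\in H^0(\overline S,\Omega^1_{\overline S})_{i_1}$ and $\omega_2\in H^0(\overline S,\Omega^1_{\overline S})_{i_2}$, which exist by hypothesis. Assuming first $i_1\neq i_2$, the two forms lie in eigenspaces for distinct characters of $G$, hence are automatically linearly independent. The entire argument then reduces to the single vanishing statement
\[
\omega_1\wedge\omega_2=0\in H^0(\overline S,\Omega^2_{\overline S})_{i_1+i_2},
\]
together with the promotion of this vanishing to arbitrary elements of the two eigenspaces, explained below.

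To prove the vanishing I would descend the wedge to the quotient and use negativity. Recall from Section 2 that the cyclic cover $\Pi':S'\to\widetilde Y$ is defined by $\mathcal L^{p}\equiv\mathcal O_{\widetilde Y}(R)$, and that the $G$-eigenspaces of the direct images of $\mathcal O$, of $\Omega^1(\log)$ and of the dualizing sheaf are governed by the line bundles $\mathcal L^{(i)}$. Carrying the standard cyclic-cover computation through to two-forms, the character-$(i_1+i_2)$ piece of $H^0(\overline S,\Omega^2_{\overline S})$ is identified with sections on $\widetilde Y$ of
\[
\mathcal M:=\omega_{\widetilde Y}(\widetilde R)\otimes(\mathcal L^{(i_1)})^{-1}\otimes(\mathcal L^{(i_2)})^{-1},
\]
that is, precisely the bundle appearing in \eqref{inter neg0}; under this identification $\omega_1\wedge\omega_2$ corresponds to a section $\sigma$ of $\mathcal M$. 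Restricting $\sigma$ to a general fibre $\widetilde\Gamma$ of $\widetilde\varphi$ and invoking \eqref{inter neg0} gives
\[
\deg\bigl(\mathcal M|_{\widetilde\Gamma}\bigr)=\widetilde\Gamma\cdotp\mathcal M<0,
\]
so $\sigma|_{\widetilde\Gamma}=0$. Since the general fibres $\widetilde\Gamma$ sweep out a dense open subset of $\widetilde Y$, the section $\sigma$ vanishes identically, whence $\omega_1\wedge\omega_2=0$.

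With the wedge vanishing, Castelnuovo--de Franchis produces a fibration $\overline f':\overline S\to\overline B'$ onto a smooth curve together with $\eta_1,\eta_2\in H^0(\overline B',\Omega^1_{\overline B'})$ satisfying $\omega_j=(\overline f')^*\eta_j$; the fibration is unique, being characterised as the Stein factorisation of the map defined by the pencil $\langle\omega_1,\omega_2\rangle$ (its general fibre is the connected curve on which both forms restrict to zero). To upgrade from the two chosen forms to the full eigenspaces, take any $\alpha\in H^0(\overline S,\Omega^1_{\overline S})_{i_1}$. The degree computation of the previous paragraph applies verbatim to $\alpha\wedge\omega_2$, the controlling bundle being again $\mathcal M$, so $\alpha\wedge\omega_2=0$; since $\omega_2=(\overline f')^*\eta_2$ is a nonzero pullback, $\alpha$ is pointwise proportional to $(\overline f')^*\eta_2$ and therefore annihilates the vertical tangent directions, so it restricts to zero on the general fibre of $\overline f'$ and lies in $(\overline f')^*H^0(\overline B',\Omega^1_{\overline B'})$. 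The symmetric argument with $\omega_1$ handles $\beta\in H^0(\overline S,\Omega^1_{\overline S})_{i_2}$, yielding \eqref{pull-back gen}. When $i_1=i_2$ the same scheme applies with the ordinary Castelnuovo--de Franchis theorem replaced by its generalization for isotropic subspaces, the relevant eigenspace being isotropic by the identical degree argument.

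The hard part will be the bookkeeping of the second paragraph: one must identify the descended wedge section \emph{precisely} with a section of $\mathcal M$, which requires tracking the branch corrections encoded in $\widetilde R$ and the floor-function divisors distinguishing $\mathcal L^{(i_1)}\otimes\mathcal L^{(i_2)}$ from $\mathcal L^{(i_1+i_2)}$, and matching the character (and dualization) conventions of \cite{CLZ} so that the degree of the restriction to $\widetilde\Gamma$ is literally the intersection number in \eqref{inter neg0}. Once this identification is in place, everything else---the negativity argument, the appeal to Castelnuovo--de Franchis, and the passage to the full eigenspaces---is formal, and this is exactly the content of the general proof of Lemma 4.12 in \cite{CLZ}.
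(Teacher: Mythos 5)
The paper offers no independent proof of this lemma --- it is stated with only the remark that it ``follows from the general proof of Lemma 4.12 of \cite{CLZ}'' --- and your reconstruction (descend $\omega_1\wedge\omega_2$ to a section of $\omega_{\widetilde Y}(\widetilde R)\otimes({\mathcal L^{(i_1)}})^{-1}\otimes({\mathcal L^{(i_2)}})^{-1}$, kill it fiberwise by the negativity hypothesis, apply Castelnuovo--de Franchis, then promote from the chosen forms to the full eigenspaces by repeating the wedge argument) is precisely that argument, so your proposal is correct and takes essentially the same approach. The one caveat is the boundary case $i_1=i_2$: Catanese's isotropic-subspace version of Castelnuovo--de Franchis requires an isotropic subspace of dimension at least $2$, which the hypothesis $H^0(\overline S,\Omega^1_{\overline S})_{i_1}\neq 0$ alone does not guarantee, but this degenerate case is never invoked in the paper, whose applications always have $i_1\neq i_2$.
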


Before we state one of our main theorems, we prove the following lemma which is fundamental for our further results.

\begin{lemma}\label{rank neq0}
Assume that $p\geq 5$ then 
\begin{equation}\label{non-triviality}
\rk F^{1,0}_{\overline{B}, i}>0
\end{equation}
for any $\frac{p+1}{2}\leq i<p-1$
\end{lemma}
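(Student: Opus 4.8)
The plan is to translate the statement into the non-vanishing of the kernel of the Higgs field on each eigenspace. First I would specialize the Chevalley--Weil formula (Theorem \ref{CW}) to a totally ramified $\Z_p$-cover of an elliptic curve ($g=1$, $u_k=1$, and $p\mid r$), obtaining $\rk E^{1,0}_{\overline{B},i}=\tfrac{r(p-i)}{p}$ and, by complex conjugation of the Hodge bundles, $\rk E^{0,1}_{\overline{B},i}=\tfrac{ri}{p}$ for $1\le i\le p-1$. Since $C$ is a Shimura curve, Lemma \ref{higgsrek} shows that $\theta_{\overline{B}}$ restricted to $A^{1,0}_{\overline{B},i}$ is an isomorphism onto $A^{0,1}_{\overline{B},i}\otimes\Omega^1_{\overline{B}}(\log\Delta_{nc})$ and vanishes on $F^{1,0}_{\overline{B},i}$. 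Hence the image of the $i$-th graded piece $\theta_i$ of $\theta_{\overline{B}}$ is exactly $A^{0,1}_{\overline{B},i}\otimes\Omega^1_{\overline{B}}(\log\Delta_{nc})$, so that $\rk F^{1,0}_{\overline{B},i}=\dim_{\C}\ker\theta_i$. It therefore suffices to prove that $\theta_i$ fails to be injective for $\tfrac{p+1}{2}\le i\le p-2$.

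Next I would make $\theta_i$ explicit on the base elliptic curve $E$ via the cyclic-cover eigenline bundles $\sL^{(i)}$. With the eigenvalue convention normalized so that the dimensions agree with Theorem \ref{CW}, one has $E^{1,0}_{\overline{B},i}\cong H^0(E,\sL^{(p-i)})$ and $E^{0,1}_{\overline{B},i}\cong H^1(E,\sL^{(p-i)}(-D_{\mathrm{red}}))$, and, up to the base twist by $\Omega^1_{\overline{B}}(\log\Delta_{nc})$, the eigenfield is cup product with the Kodaira--Spencer class $\kappa\in H^1(E,\mathcal{O}_E(-D_{\mathrm{red}}))$ of the family,
\[
\theta_i=\cup\,\kappa\colon H^0\big(E,\sL^{(p-i)}\big)\longrightarrow H^1\big(E,\sL^{(p-i)}(-D_{\mathrm{red}})\big).
\]
Twisting the structure sequence $0\to\mathcal{O}_E(-D_{\mathrm{red}})\to\mathcal{O}_E\to\mathcal{O}_{D_{\mathrm{red}}}\to 0$ by $\sL^{(p-i)}$ and using $H^0(E,\sL^{(p-i)}(-D_{\mathrm{red}}))=H^0(E,\sL^{-i})=0$ (since $\sL^p\equiv\mathcal{O}_E(D)$), the map factors as $\theta_i=\delta\circ m_v\circ\mathrm{ev}$, where $\mathrm{ev}$ is the (injective) evaluation at the $r$ branch points, $m_v$ is multiplication by the velocity vector $v$ of the branch points along $\overline{B}$, and $\delta$ is the connecting map with $\ker\delta=\mathrm{im}\,\mathrm{ev}=:W$, $\dim_{\C}W=\tfrac{r(p-i)}{p}$. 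Consequently $\ker\theta_i\cong W\cap m_v^{-1}(W)$, and everything reduces to showing this intersection is nonzero.

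The main obstacle is strictness. The rank identities of Lemma \ref{higgsrek} — equivalently the naive estimate $\dim_{\C}(W\cap m_v^{-1}W)\ge 2\dim_{\C}W-r=\tfrac{r(p-2i)}{p}$ — only give $\rk F^{1,0}_{\overline{B},i}\ge 0$ once $i\ge p/2$, and for a generic one-parameter deformation the intersection $W\cap m_v^{-1}(W)$ is in fact $0$; so the positivity genuinely uses that $C$ is special rather than generic. My plan here is to exploit the maximal-Higgs (Shimura) condition, which forces the velocity $v$ and the evaluation image $W$ to be compatible with the graded multiplication $H^0(\sL^{(a)})\otimes H^0(\sL^{(b)})\to H^0(\sL^{(a+b)})$ arising from $\sL^p\equiv\mathcal{O}_E(D)$; I expect this compatibility to produce a nonzero element of $W\cap m_v^{-1}(W)$ precisely when $p-i\ge 2$, i.e. $i\le p-2$, while for $i=p-1$ (so $p-i=1$, $\dim_{\C}W=\tfrac{r}{p}$) no such element need exist — which is exactly why the asserted range stops at $i<p-1$. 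I anticipate the delicate point to be verifying this compatibility; I would approach it either by analyzing the maximal-Higgs isomorphism eigenspace by eigenspace or by bounding $\rk\theta_i$ directly through the factorization over the $r$ branch points, and I would check sharpness against the excluded case $i=p-1$ and against $p=5$, where the claim collapses to the single inequality $\rk F^{1,0}_{\overline{B},3}>0$.
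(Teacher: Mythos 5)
Your reduction $\rk F^{1,0}_{\overline{B},i}=\dim\ker\theta_i$ is sound (for a Shimura curve the Higgs field is an isomorphism on the ample part), and your Chevalley--Weil computation matches the paper's starting point. But the proposal has a genuine gap at exactly the step you flag yourself: the existence of a nonzero element of $W\cap m_v^{-1}(W)$ is stated as an expectation (``I expect this compatibility to produce a nonzero element''), not proved, and the mechanism you hope for --- compatibility of $v$ and $W$ with the graded multiplication $H^0(\sL^{(a)})\otimes H^0(\sL^{(b)})\to H^0(\sL^{(a+b)})$ --- is never made precise. Moreover, as you yourself observe, inside a single eigenspace the statement is false for a generic family, so no analysis confined to the $i$-th eigensheaf can succeed: the maximal-Higgs property alone only gives $\rk A^{1,0}_{\overline{B},i}=\rk A^{0,1}_{\overline{B},i}=\rk A^{1,0}_{\overline{B},p-i}$, hence $\rk F^{1,0}_{\overline{B},i}\geq\rk E^{1,0}_{\overline{B},i}-\rk E^{1,0}_{\overline{B},p-i}=\frac{r(p-2i)}{p}$, which is $\leq 0$ throughout your range $i\geq\frac{p+1}{2}$ --- precisely your naive bound, and it yields nothing.

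The missing ingredient is arithmetic and couples \emph{different} eigenspaces: each $\V_{\overline{B},i}$ is defined over $\Q(\xi_p)$, and $\gal(\Q(\xi_p)/\Q)$ permutes the eigenspaces $1\leq i\leq p-1$ transitively. The paper first reduces (via \cite{VZ04}, Corollary 4.4, after a finite base change) to the case where the unitary subsystem is trivial; the Galois-orbit lemma then forces $\rk F^{1,0}_{\overline{B},i}+\rk F^{0,1}_{\overline{B},i}$ to be independent of $i$, while Chevalley--Weil gives that $\rk E^{1,0}_{\overline{B},i}+\rk E^{0,1}_{\overline{B},i}$ is also independent of $i$; subtracting, $\rk A^{1,0}_{\overline{B},i}+\rk A^{0,1}_{\overline{B},i}$ is constant, and the maximal-Higgs equality $\rk A^{1,0}_{\overline{B},i}=\rk A^{0,1}_{\overline{B},i}$ then makes \emph{all} ample ranks equal. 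This permits a comparison of eigenspace $i$ with the extreme eigenspace $p-1$:
\begin{equation*}
\rk F^{1,0}_{\overline{B},i}=\rk E^{1,0}_{\overline{B},i}-\rk A^{1,0}_{\overline{B},i}
=\rk E^{1,0}_{\overline{B},i}-\rk A^{1,0}_{\overline{B},p-1}
\geq \rk E^{1,0}_{\overline{B},i}-\rk E^{1,0}_{\overline{B},p-1}=\frac{r(p-i-1)}{p}>0
\end{equation*}
for all $i<p-1$. Note the positivity comes from comparing $\rk E^{1,0}_{\overline{B},i}$ against the \emph{smallest} Hodge rank $\rk E^{1,0}_{\overline{B},p-1}=r/p$, not from intersecting subspaces within the $i$-th eigenspace; without this cross-eigenspace input (or a genuine substitute for it) your outline cannot be completed.
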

\begin{proof}
Since this claim is independent of the base change, we may assume by \cite{VZ04}, Corollary 4.4 that the unitary local subsystem $\V_B^u\subseteq \V_B\otimes\C$ is trivial after a suitable finite base change, i.e., $\V_B^u=\V_B^{tr}$. By the above, we have
\[\rk (F^{1,0}_{\overline{B},i})+\rk (F^{0,1}_{\overline{B},i})=\rk (F^{1,0}_{\overline{B},j})+\rk (F^{0,1}_{\overline{B},j}), \forall 1\leq i\leq j\leq p-1\]
One can also check that 
\[\rk (E^{1,0}_{\overline{B},i})+\rk (E^{0,1}_{\overline{B},i})=\rk (E^{1,0}_{\overline{B},j})+\rk (E^{0,1}_{\overline{B},j}), \forall 1\leq i\leq j\leq p-1\]
Now this together with \ref{rkeqs} yields
\begin{equation}
\rk A^{1,0}_{\overline{B},i}=\rk A^{0,1}_{\overline{B},j}, \forall 1\leq i\leq j\leq p-1
\end{equation}
Hence 
\begin{align}
& \rk F^{1,0}_{\overline{B},i}=\rk E^{1,0}_{\overline{B},i}-\rk A^{1,0}_{\overline{B},i}\\
&= \rk E^{1,0}_{\overline{B},i}-\rk A^{1,0}_{\overline{B},p-1}= \rk E^{1,0}_{\overline{B},i}-(\rk E^{1,0}_{\overline{B},p-1}-\rk F^{1,0}_{\overline{B},p-1})\\ 
&\geq \rk E^{1,0}_{\overline{B},i}-\rk E^{1,0}_{\overline{B},p-1}=\frac{r(p-(i+1))}{p}>0
\end{align}
The last inequality holds for the chosen $i$ and for $p\geq 5$.
\end{proof}

\begin{lemma}\label{second fib p}
Assume $p\geq 7$, then after suitable base change, there exists a fibration $\overline{f}^{\prime}:\overline{S}\to \overline{B}^{\prime}$ such that 
\[\displaystyle\oplus_{i=0}^{p-1} H^0(\overline{S}, \Omega^1_{\overline{S}})_i\subseteq  (\overline{f}^{\prime})^* H^0(\overline{B}^{\prime}, \Omega^1_{\overline{B}^{\prime}})\]
and hence $g(\overline{B}^{\prime})\geq \rk F^{1,0}_{\overline{B}}$. 
\end{lemma}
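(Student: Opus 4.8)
The plan is to produce a single fibration $\overline f':\overline S\to\overline B'$ through which every eigen-summand $H^0(\overline S,\Omega^1_{\overline S})_i$ factors, and then to read off the genus bound by a dimension count. The final implication is the easy direction: once $\oplus_{i=0}^{p-1}H^0(\overline S,\Omega^1_{\overline S})_i\subseteq(\overline f')^*H^0(\overline B',\Omega^1_{\overline B'})$, the reverse containment (pullbacks of $1$-forms are $1$-forms) forces equality, so $g(\overline B')=q(\overline S)=\sum_i\dim H^0(\overline S,\Omega^1_{\overline S})_i\ge\sum_i\rk F^{1,0}_{\overline B,i}=\rk F^{1,0}_{\overline B}$, using that after the base change trivialising $\V^u_B$ each flat eigen-piece contributes $\rk F^{1,0}_{\overline B,i}$ independent holomorphic $1$-forms.

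First I would pin down which eigenspaces are nonzero and available. As in the proof of Lemma \ref{rank neq0}, pass to a finite base change making $\V^u_B=\V^{tr}_B$ trivial (\cite{VZ04}, Corollary 4.4); then every flat eigen-piece with $\rk F^{1,0}_{\overline B,i}>0$ yields genuine holomorphic $1$-forms, so $H^0(\overline S,\Omega^1_{\overline S})_i\ne0$. Combining \ref{rknzero} of Lemma \ref{higgsrek} with the Chevalley--Weil values $\rk E^{1,0}_{\overline B,i}=h_i$ (with $g=1$ for an elliptic base, so $h_i>h_{p-i}$ iff $i<p/2$) gives $\rk F^{1,0}_{\overline B,i}>0$ for $1\le i\le(p-1)/2$, while Lemma \ref{rank neq0} supplies it for $(p+1)/2\le i\le p-2$. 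Thus all $H^0(\overline S,\Omega^1_{\overline S})_i$ with $1\le i\le p-2$ are nonzero. I would also verify the hypothesis of Lemma \ref{trivial base change} and the first Lemma \ref{inter neg}: since the family represents a non-isotrivial Shimura curve, the branch points vary and sweep out sections of the elliptic fibration $\widetilde\varphi:\widetilde Y\to\overline B$, so $\widetilde R$ contains a section.

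Next I would feed this into the intersection criterion. On a general (elliptic) fibre $\widetilde\Gamma$ one has $\deg\omega_{\widetilde Y}|_{\widetilde\Gamma}=0$, $\deg\widetilde R|_{\widetilde\Gamma}=r$, and $\deg\mathcal{L}^{(i)}|_{\widetilde\Gamma}=\sum_k\langle iu_k/p\rangle$ from $\mathcal{L}^p\equiv\sO_{\widetilde Y}(R)$, whence
\[
\widetilde\Gamma\cdot\bigl(\omega_{\widetilde Y}(\widetilde R)\otimes(\mathcal{L}^{(i_1)})^{-1}\otimes(\mathcal{L}^{(i_2)})^{-1}\bigr)=r-\sum_k\langle i_1u_k/p\rangle-\sum_k\langle i_2u_k/p\rangle ,
\]
which is negative precisely when the fractional parts sum past $r$ (e.g. $i_1+i_2>p$ in the totally ramified case). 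For any two upper-half indices $(p+1)/2\le i_1<i_2\le p-2$ this is negative and both eigenspaces are nonzero; note there are at least two such indices exactly when $p\ge7$, which is where the hypothesis enters. The second Lemma \ref{inter neg} then yields a fibration $\overline f'$ with $H^0(\overline S,\Omega^1_{\overline S})_{i_k}\subseteq(\overline f')^*H^0(\overline B',\Omega^1_{\overline B'})$; by the uniqueness clause the fibration attached to any admissible pair sharing an index is the same, and since proportionality of nonzero $1$-forms on a surface is transitive, this $\overline f'$ absorbs the whole connected component of the negative-intersection graph.

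The crux, and the step I expect to be the main obstacle, is to show that this single $\overline f'$ absorbs every remaining eigenspace, in particular the small indices $i$ (which $i_1+i_2>p$ can never connect directly, so the graph of paragraph three is genuinely disconnected at $i=1$) and the invariant part $i=0$. For the small indices I would invoke the first Lemma \ref{inter neg}: with a seed $i_0$ in the upper half it trivialises $F^{1,0}_{\overline B,i}$ over the interval $p-i_0+1\le i<i_0$, so those $1$-forms are flat and factor through the constant part $A_0$ of the variation of Hodge structure; the negative intersection forces the image of $\overline S\to A_0$ to be a curve, whose Stein factorisation coincides with $\overline f'$ by uniqueness. One must then enlarge the seed (again using $p\ge7$ to secure enough nonzero upper-half pieces) to reach every $1\le i\le p-1$, and separately check that the $G$-invariant forms $H^0(\overline S,\Omega^1_{\overline S})_0$, coming from the elliptic quotient family $\overline Y\to\overline B$, also descend through $\overline f'$. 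Handling these poorly-connected low indices and the $i=0$ summand uniformly, rather than the individual intersection estimates, is the delicate part of the argument.
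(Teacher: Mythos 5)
Your first three paragraphs track the paper's own proof quite closely: the paper likewise picks an upper-half seed $i_0$ with $\rk F^{1,0}_{\overline{B},i_0}>0$ via Lemma~\ref{rank neq0}, verifies the negative-intersection condition \ref{inter neg0} on elliptic fibres by exactly the computation you give, and glues the fibrations produced by the second Lemma~\ref{inter neg} using its uniqueness clause. But the crux you flag in your last paragraph --- absorbing the low indices and $i=0$ --- is a genuine gap, and your proposed repair does not work. The interval supplied by the first Lemma~\ref{inter neg} is $p-i_0+1\le i<i_0$ with a seed necessarily satisfying $i_0\le p-2$ (Lemma~\ref{rank neq0} gives nothing at $i=p-1$), so $p-i_0+1\ge 3$ and the indices $i=1,2$ are never reached, no matter how you ``enlarge the seed''. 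Moreover, trivializing $F^{1,0}_{\overline{B},i}$ does not by itself place $H^0(\overline{S},\Omega^1_{\overline{S}})_i$ inside $(\overline{f}')^*H^0(\overline{B}',\Omega^1_{\overline{B}'})$: the uniqueness statement of Lemma~\ref{inter neg} concerns fibrations satisfying \ref{pull-back gen} for a specific pair of indices, so it cannot be invoked to identify the Stein factorization of your map to a ``constant part $A_0$'' with $\overline{f}'$.

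The idea you are missing is arithmetic rather than geometric. The paper chooses the \emph{two} pairs $(i,i_0)$ and $(p-i,i_0)$ --- note the reflected index $p-i$, which your upper-half graph omits, and whose admissibility follows from \ref{rkgreater} and $(p-i)+i_0>p$ --- so that both $H^0(\overline{S},\Omega^1_{\overline{S}})_i$ and $H^0(\overline{S},\Omega^1_{\overline{S}})_{p-i}$ lie in $(\overline{f}')^*H^0(\overline{B}',\Omega^1_{\overline{B}'})$. Since $\overline{H^0(\overline{S},\Omega^1_{\overline{S}})_{p-i}}=H^1(\overline{S},\sO_{\overline{S}})_i$, complex conjugation upgrades this to containment of the full local-system eigenspace $(H^1(\overline{S},\Q)\otimes\C)_i\subseteq(\overline{f}')^*(H^1(\overline{B}',\Q)\otimes\C)$. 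The right-hand side is defined over $\Q$, hence stable under $\gal(\Q(\xi_p)/\Q)$, and this Galois group permutes the eigenspaces $(H^1(\overline{S},\Q)\otimes\C)_j$, $1\le j\le p-1$, transitively; therefore containment of one complete eigenspace forces containment of all of them --- including the isolated index $j=1$ --- in one stroke, and the invariant part $j=0$ is pulled back in any case. This Galois-orbit argument is what replaces your fourth paragraph; without it, or something genuinely equivalent, the proof is incomplete.
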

\begin{proof}
Choose some $\frac{p+1}{2}< i_0<p-1$, so that $\rk F^{1,0}_{\overline{B}, i_0}>0$ by Lemma \ref{rank neq0}. We choose also an $i$ such that $p-i_0+1\leq\frac{p+1}{2}\leq i< i_0$. Note that $\rk F^{1,0}_{\overline{B}, i}>0$ again by Lemma \ref{rank neq0}. Then the eigenspaces $\mathcal{L}^{(i)}, \mathcal{L}^{(i_0)}$ satisfy
\begin{equation}\label{inter neg1}
\widetilde{\Gamma}\cdotp(\omega_{\widetilde{Y}}(\widetilde{R})\otimes ({\mathcal{L}^{(i)}}^{-1}\otimes {\mathcal{L}^{(i_0)}}^{-1}))<0
\end{equation}
Furthermore we have $\rk F^{1,0}_{\overline{B}, p-i}>0$ by virtue of \ref{rkgreater} and also
\begin{equation}\label{inter neg2}
\widetilde{\Gamma}\cdotp(\omega_{\widetilde{Y}}(\widetilde{R})\otimes ({\mathcal{L}^{(p-i)}}^{-1}\otimes {\mathcal{L}^{(i_0)}}^{-1}))<0
\end{equation}
Now  Lemma ~\ref{inter neg} implies that there exist unique fibrations $\overline{f}_j^{\prime}:\overline{S}\to \overline{B}_j^{\prime}$ such that
\[H^0(\overline{S}, \Omega^1_{\overline{S}})_{i_0}\oplus H^0(\overline{S}, \Omega^1_{\overline{S}})_{j}\subseteq (\overline{f}_j^{\prime})^*H^0(\overline{B}_j^{\prime}, \Omega^1_{\overline{B}_j^{\prime}})\]
for $j=i, p-i$. Since $H^0(\overline{S}, \Omega^1_{\overline{S}})_{i_0}\neq 0$, the uniqueness of $\overline{f}_j^{\prime}$ implies that these morphisms are the same which we denote by $\overline{f}^{\prime}:\overline{S}\to\overline{B}^{\prime}$. This shows that $H^0(\overline{S}, \Omega^1_{\overline{S}})_i\subseteq (\overline{f}^{\prime})^*H^0(\overline{B}^{\prime}, \Omega^1_{\overline{B}^{\prime}})$ and  $H^0(\overline{S}, \Omega^1_{\overline{S}})_{p-i}\subseteq (\overline{f}^{\prime})^*H^0(\overline{B}^{\prime}, \Omega^1_{\overline{B}^{\prime}})$. Therefore
\[H^0(\overline{S}, \Omega^1_{\overline{S}})_i\oplus H^0(\overline{S}, \Omega^1_{\overline{S}})_{p-i}\subseteq (\overline{f}^{\prime})^*H^0(\overline{B}^{\prime}, \Omega^1_{\overline{B}^{\prime}})\]
Now the pull-back map $(\overline{f}^{\prime})^*:H^1(\overline{B}^{\prime}, \Q)\to H^1(\overline{S}, \Q)$ is defined over $\Q$ and we have 
\[\overline{H^0(\overline{S}, \Omega^1_{\overline{S}})_{p-i}}=H^1(\overline{S}, \sO_{\overline{S}})_{i}\]
Hence
\[H^0(\overline{S}, \Omega^1_{\overline{S}})_i\oplus H^1(\overline{S}, \sO_{\overline{S}})_{i}=(H^1(\overline{S}, \Q)\otimes\C)_i\subseteq  (\overline{f}^{\prime})^*(H^1(\overline{B}^{\prime}, \Q)\otimes\C)\]
Note that the arithmetic Galois group $\gal(\Q(\xi_p)/\Q)$ acts on the eigenspace decomposition 
\[H^1(\overline{S}, \Q\otimes\C)=H^1(\overline{S}, \Q)\otimes\C)_0\displaystyle\bigoplus_{i=1}^{p-1} (H^1(\overline{S}, \Q)\otimes\C)_i\]
The eigenspaces $(H^1(\overline{S}, \Q)\otimes\C)_i$ for $1\leq i\leq p-1$ are permuted by this action and the eigenspace $H^1(\overline{S}, \Q)\otimes\C)_0$ is fixed by this action. Since this action is transitive, it holds that \par $\displaystyle\bigoplus_{i=1}^{p-1} (H^1(\overline{S}, \Q)\otimes\C)_i\subseteq  (\overline{f}^{\prime})^*(H^1(\overline{B}^{\prime}, \Q)\otimes\C)$. Note that in any case,\par $H^0(\overline{S}, \Omega^1_{\overline{S}})_0\subseteq  (\overline{f}^{\prime})^*H^0(\overline{B}^{\prime}, \Omega^1_{\overline{B}^{\prime}})$. Therefore we have that
\[H^1(\overline{S}, \Q)\otimes\C\subseteq  (\overline{f}^{\prime})^*(H^1(\overline{B}^{\prime}, \Q)\otimes\C)\]
and hence $g(\overline{B}^{\prime})\geq \rk F^{1,0}_{\overline{B}}$. 
\end{proof}

\begin{theorem}\label{non-comp}
Let $p\geq 7$ be a prime number. Then there does not exist any non-compact Shimura curve contained generically in the Torelli locus of $p$-cyclic covers of elliptic curves of genus $g\geq 8$.
\end{theorem}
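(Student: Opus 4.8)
The plan is to argue by contradiction, simply assembling the two lemmas just proved. Suppose a non-compact Shimura curve $C$ of the asserted type exists with $g\geq 8$, and represent it by a family $\overline{f}:\overline{S}\to\overline{B}$ of semi-stable $\Z_p$-covers of elliptic curves, the $\Z_p$-action being extended to $\overline{S}$ after a finite base change as in Section 3. All the Higgs data $(E^{1,0}_{\overline{B}}\oplus E^{0,1}_{\overline{B}},\theta_{\overline{B}})$ together with its eigenspace and $A\oplus F$ decompositions is then available, and Chevalley-Weil (Theorem \ref{CW}) fixes the numerology $\rk E^{1,0}_{\overline{B},i}=\frac{r(p-i)}{p}$ that feeds the earlier rank computations.

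First I would apply Lemma \ref{second fib p}, whose single hypothesis $p\geq 7$ is exactly our standing assumption. It yields, after a further finite base change of $\overline{B}$, a fibration $\overline{f}':\overline{S}\to\overline{B}'$ with $g(\overline{B}')\geq \rk F^{1,0}_{\overline{B}}$, which is precisely the auxiliary fibration required to enter Lemma \ref{second fib}. Since $p\geq 7$ is an odd prime, we are in case (2) of that lemma ($|G|=n=p\geq 3$ odd), and the family continues to represent the same non-compact Shimura curve after the base change. Lemma \ref{second fib} then forces $g<8$, contradicting $g\geq 8$; this contradiction proves the theorem.

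All the real content thus sits in Lemmas \ref{second fib p} and \ref{second fib}, so the only thing demanding care is that the hypotheses of the former genuinely hold for elliptic bases. I expect the main obstacle to be the intersection-negativity \ref{inter neg1} underlying the construction of $\overline{f}'$: the point is that the relative dualizing sheaf $\omega_{\widetilde{Y}}$ restricts trivially to an elliptic fiber $\widetilde{\Gamma}$, so that $\widetilde{\Gamma}\cdot\omega_{\widetilde{Y}}(\widetilde{R})$ collapses to the branch degree $\deg\widetilde{R}|_{\widetilde{\Gamma}}$ and is outweighed by the contribution of ${\mathcal{L}^{(i)}}^{-1}\otimes {\mathcal{L}^{(i_0)}}^{-1}$ as soon as $i+i_0>p$, while the nonvanishing of the eigenspaces $H^0(\overline{S},\Omega^1_{\overline{S}})_i$ needed to invoke Lemma \ref{inter neg} is supplied by Lemma \ref{rank neq0} for $p\geq 5$. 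Finally I would check that the section hypothesis on $\widetilde{R}$ and the successive base changes required by Lemmas \ref{second fib p} and \ref{second fib} can be arranged together without affecting non-compactness, the genus $g$, or the Shimura property; this is routine, since a finite base change replaces a multisection of $\widetilde{\varphi}$ by a genuine section and leaves all the relevant numerical invariants unchanged.
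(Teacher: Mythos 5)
Your proposal is correct and follows essentially the same route as the paper: assume such a non-compact Shimura curve exists with $g\geq 8$, invoke Lemma \ref{second fib p} (using $p\geq 7$) to produce the second fibration $\overline{f}':\overline{S}\to\overline{B}'$ with $g(\overline{B}')\geq \rk F^{1,0}_{\overline{B}}$, and then derive the contradiction $g<8$ from Lemma \ref{second fib}. Your additional remarks on why the intersection-negativity and eigenspace nonvanishing hold over an elliptic base, and on the compatibility of the base changes, correspond exactly to the content the paper delegates to Lemmas \ref{inter neg}, \ref{rank neq0} and the surrounding discussion.
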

\begin{proof}
Assume on the contrary that there exists a Shimura curve $C$ in the above locus. Let $\overline{f}:\overline{S}\to\overline{B}$ be a family of semi-stable $p$-cyclic covers of elliptic curves which represents $C$. Possibly after a base change we may assume that the group $G\cong \Z/p\Z$ on $\overline{S}$ and therefore also on the Higgs bundle $(E^{1,0}_{\overline{B}}\oplus E^{0,1}_{\overline{B}}, \theta_{\overline{B}})$ and its subbundles with eigenspace decompositions. We first of all treat the case where $C$ is non-compact. In this case by \cite{VZ04}, Corollary 4.4 the flat subbundle $F^{1,0}_{\overline{B}}$ becomes trivial after a suitable base change, i.e., $F^{1,0}_{\overline{B}}\cong\sO^{\oplus s}_{\overline{B}}$, where $s=\rk F^{1,0}_{\overline{B}}$. Furthermore, by Lemma~\ref{second fib p} that there exists another fibration $\overline{f}^{\prime}:\overline{S}\to\overline{B}^{\prime}$ different from $\overline{f}$ such that $g(\overline{B}^{\prime})\geq \rk F^{1,0}_{\overline{B}}$. But this contradicts Lemma \ref{second fib} which proves our claim that $g< 8$. 
\end{proof}

\begin{remark}
The above results can be generalized to families of $\Z_p$-covers of curves of any genus. The generalization of all of the results is straightforward (and indeed already independent of the genus of the base curve) except Lemma \ref{inter neg}. Note that in the general case, where we consider the covers of curves of genus $s$, we have
\begin{equation}\label{inter neg general}
\widetilde{\Gamma}\cdotp\omega_{\widetilde{Y}}(\widetilde{R})=2s-2+\deg \widetilde{R}|_{\widetilde{\Gamma}}
\end{equation}
In this case the prime number $p$ must be large enough so that we can take $p-i_0+a(s)+1\leq i<i_0$ as in Lemma \ref{second fib p} for some number $a(s)$ depending on $s$. Therefore we have the following result
\end{remark}
\begin{theorem}\label{non-comp p gen}
Let $s>0$ be a positive integer. Then there exists and integer $b(s)$ such that for all  prime numbers $p\geq b(s)$ there does not exist any non-compact Shimura curve contained generically in the Torelli locus $\sT_g$ of $p$-cyclic covers of curves of genus $s$ of genus $g\geq 8$.
\end{theorem}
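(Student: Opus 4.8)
The plan is to run the proof of Theorem~\ref{non-comp} with essentially no change, the only genuinely new ingredient being a genus-$s$ version of the intersection estimate underlying the negativity condition \ref{inter neg0}. As noted in the Remark preceding the statement, every other tool is already insensitive to the base genus: in Lemma~\ref{higgsrek} and Lemma~\ref{rank neq0} the base genus enters the Chevalley--Weil formula (Theorem~\ref{CW}) only through the additive constant $g-1$, which cancels in each difference $\rk E^{1,0}_{\overline{B},i}-\rk E^{1,0}_{\overline{B},p-i}=\frac{r(2i-p)}{p}$ exploited there, and the triviality criterion of Lemma~\ref{trivial base change}, the fibration-existence lemma, and the obstruction Lemma~\ref{second fib} are stated for arbitrary families. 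So I would fix $s$, assume for contradiction that a non-compact Shimura curve $C$ with $g\geq 8$ exists in the Torelli locus of $\Z_p$-covers of genus-$s$ curves, and represent it by a semistable family $\overline{f}\colon\overline{S}\to\overline{B}$ with extended $G\cong\Z/p\Z$-action, exactly as in Theorem~\ref{non-comp}.

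The one computation that changes is the fibre intersection number. Write $\delta=\deg\sL|_{\widetilde{\Gamma}}$; the divisibility condition $p\mid\sum_k u_k$ imposed in Section~2 forces $\delta$ to be a positive integer, and the relation $\sL^p\equiv\sO_{\widetilde{Y}}(\widetilde{R})$ gives $\deg\widetilde{R}|_{\widetilde{\Gamma}}=p\delta$ together with $\deg\sL^{(i)}|_{\widetilde{\Gamma}}=i\delta$ in the representative totally ramified case $u_k\equiv 1$. Using \ref{inter neg general} in place of the elliptic identity $\widetilde{\Gamma}\cdot\omega_{\widetilde{Y}}(\widetilde{R})=\deg\widetilde{R}|_{\widetilde{\Gamma}}$, one obtains
\[
\widetilde{\Gamma}\cdot\bigl(\omega_{\widetilde{Y}}(\widetilde{R})\otimes({\sL^{(i)}}^{-1}\otimes{\sL^{(i_0)}}^{-1})\bigr)=(2s-2)+\bigl(p-i-i_0\bigr)\delta .
\]
Hence the negativity required by \ref{inter neg0} reads $(i+i_0-p)\delta>2s-2$, that is $i+i_0-p>a(s)$ with $a(s):=\lceil(2s-2)/\delta\rceil\leq 2s-2$ \emph{depending only on $s$} (not on $p$), because $\delta\geq 1$. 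For $s=1$ this recovers the condition $i\geq p-i_0+1$ of the elliptic case; for $s>1$ the admissible window for the auxiliary index is merely pushed inward by the bounded constant $a(s)$.

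With this estimate I would reprove Lemma~\ref{second fib p}. By Lemma~\ref{rank neq0} one has $\rk F^{1,0}_{\overline{B},i}>0$ throughout $\tfrac{p+1}{2}\leq i<p-1$. Taking $i_0=p-2$ at the top of this window, the negativity conditions for the two pairs $(i,i_0)$ and $(p-i,i_0)$ confine the auxiliary index to
\[
\max\Bigl(p-i_0+a(s)+1,\ \tfrac{p+1}{2}\Bigr)\leq i\leq i_0-a(s)-1 ,
\]
and a short count shows this interval is non-empty once $2a(s)+7\leq p$; since $a(s)\leq 2s-2$ one may take $b(s)=4s+3$, which for $s=1$ returns the hypothesis $p\geq 7$ of Theorem~\ref{non-comp}. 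For such $p$ the indices $i$ and $p-i$ satisfy $\rk F^{1,0}_{\overline{B},i},\rk F^{1,0}_{\overline{B},p-i}>0$ (the latter via \ref{rkgreater}), the fibration-existence lemma (Lemma~\ref{inter neg}) produces a single $\overline{f}'\colon\overline{S}\to\overline{B}'$ containing the eigenspaces of indices $i$ and $p-i$ by uniqueness, and the transitivity of the $\gal(\Q(\xi_p)/\Q)$-action spreads the inclusion to all of $H^1(\overline{S},\Q)\otimes\C$, yielding $g(\overline{B}')\geq\rk F^{1,0}_{\overline{B}}$ verbatim. Then, as in Theorem~\ref{non-comp}, triviality of $F^{1,0}_{\overline{B}}$ after base change together with $\overline{f}'$ contradicts Lemma~\ref{second fib}, forcing $g<8$.

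The main obstacle is not geometric but combinatorial: one must confirm that the inward shift $a(s)$ does not exhaust the guaranteed non-vanishing window $[\tfrac{p+1}{2},p-1)$, and extract from this the explicit linear threshold $b(s)$. The crucial point making this possible is that $a(s)$ is \emph{bounded in terms of $s$ alone} once one observes, via the divisibility $p\mid\sum_k u_k$, that $\delta=\deg\sL|_{\widetilde{\Gamma}}$ is a positive integer; were $\delta$ allowed to shrink with $p$ the constant $a(s)$ would grow with $p$ and the counting would fail. The remaining subtlety is the case of general ramification indices $u_k\not\equiv 1$, where $\sL^{(i)}$ carries the fractional-part corrections; there the intersection number acquires extra non-negative terms, which I would absorb into a possibly larger but still $p$-independent constant $a(s)$, exactly as the elliptic argument proceeds ``by definition of $\sL$''.
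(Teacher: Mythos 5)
Your proposal is correct and follows essentially the same route the paper takes: the paper's ``proof'' of Theorem~\ref{non-comp p gen} is really just the preceding Remark, which says that everything except Lemma~\ref{inter neg} is base-genus independent and that one must replace the elliptic intersection identity by \ref{inter neg general}, shrinking the admissible window to $p-i_0+a(s)+1\leq i<i_0$ for some $a(s)$ depending on $s$. Your write-up supplies exactly the details the paper omits --- the computation $(2s-2)+(p-i-i_0)\delta$ of the intersection number, the bound $a(s)\leq 2s-2$ via $\delta\geq 1$, and the explicit threshold $b(s)=4s+3$ recovering $p\geq 7$ at $s=1$ --- so it is, if anything, a more complete version of the paper's argument.
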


If $n$ is not a prime we can prove also statements like Theorem \ref{non-comp}. Before doing this, we need some auxiliary results
\begin{lemma} \label{ramification of quotient}
Let $f:C\to Y$ be a Galois $G$-cover of degree $|G|=n$ and $N\lhd G$ a normal subgroup. Consider the following diagram in which $\overline{f}:\overline{C}\to Y$ is the quotient cover with the notations as above. 
\[\begin{tikzcd}
C \arrow{rr}{f} \arrow[swap]{dr}{\varphi} & & Y \\
& \overline{C} \arrow[swap]{ur}{\overline{f}}
\end{tikzcd}\]
Then
\begin{enumerate}
\item  $\br(\overline{f})\subseteq \br(f)$, i.e., the branch points of $\overline{f}$ are also branch points of $f$. \item $\ram(\varphi)\subseteq \ram(f)$, i.e., the ramification points of $\varphi:C\to\overline{C}=C/N$ are also ramification points of $f$. \item $\ram(\overline{f})\subseteq \varphi(\ram(f))$, i.e., every ramification point of $\overline{f}$ is the image of a ramification point of $f$ under $\varphi$ and the inertia group of a ramification point $\overline{x}=\varphi(x)$ of $\overline{f}$ is (isomorphic to) $G_x/N\cap G_x$, where $G_x$ is the inertia group of the point $x\in C$. 
\end{enumerate}
\end{lemma}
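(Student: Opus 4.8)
The plan is to reduce all three assertions to a single computation of inertia groups under the factorization $f=\overline{f}\circ\varphi$. Since $N\lhd G$, the $G$-action on $C$ descends to a $(G/N)$-action on $\overline{C}=C/N$ with $\overline{C}/(G/N)=C/G=Y$; thus $\varphi:C\to\overline{C}$ is a Galois $N$-cover and $\overline{f}:\overline{C}\to Y$ is a Galois $(G/N)$-cover. For a point $x\in C$ over $y=f(x)$ I would measure the ramification of $f$ at $x$ by the inertia group $G_x=\{g\in G:gx=x\}$, recalling that the ramification index equals $|G_x|$ and that, because $G$ acts transitively on each fibre, the inertia groups of the points over a fixed $y$ are conjugate. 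Hence $y\in\br(f)$ if and only if $G_x\neq 1$ for one (equivalently every) $x$ over $y$, and analogously for $\varphi$ and $\overline{f}$.

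The heart of the matter, and essentially the only real content, is to identify the three inertia groups attached to $x$, $\overline{x}=\varphi(x)$ and $y$. First, the inertia group of $x$ for the $N$-cover $\varphi$ is the stabiliser of $x$ inside $N$, namely $N\cap G_x$. Next, the inertia group of $\overline{x}$ for the $(G/N)$-cover $\overline{f}$ is the stabiliser of the $N$-orbit $Nx$ in $G/N$: a class $gN$ fixes $\overline{x}$ exactly when $gx\in Nx$, that is $g\in NG_x$, so this inertia group equals $NG_x/N$, which by the second isomorphism theorem is $G_x/(N\cap G_x)$. This is precisely the formula asserted in part (3), and all three parts follow from it.

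With these identities in hand I would finish as follows. For (2): if $x\in\ram(\varphi)$ then $N\cap G_x\neq 1$, and since $N\cap G_x\subseteq G_x$ this forces $G_x\neq 1$, so $x\in\ram(f)$. For (3): if $\overline{x}\in\ram(\overline{f})$ then $NG_x/N\neq 1$, hence $G_x\not\subseteq N$ and in particular $G_x\neq 1$; thus any lift $x$ of $\overline{x}$ lies in $\ram(f)$, giving $\overline{x}=\varphi(x)\in\varphi(\ram(f))$, while the inertia group of $\overline{x}$ is $G_x/(N\cap G_x)$ as just computed. For (1): a branch point $y$ of $\overline{f}$ supports some $\overline{x}$ with nontrivial inertia, and choosing a lift $x$ as in (3) produces $G_x\neq 1$, so $x$ ramifies over $y$ and $y\in\br(f)$.

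The argument is purely group-theoretic once the covers are set up, so no step is a genuine obstacle; the point needing the most care is the stabiliser computation for $\overline{f}$, where I must verify that $gN$ fixes the orbit $\overline{x}=Nx$ if and only if $g$ carries $x$ back into the same $N$-orbit, producing the subgroup $NG_x$. Here normality of $N$ is used twice: to make $G/N$ act on $C/N$, and to guarantee that $NG_x$ is a subgroup. I would also remark that all lifts of $\overline{x}$ differ by elements of $N$ and hence have conjugate inertia groups, so each conclusion is independent of the chosen lift.
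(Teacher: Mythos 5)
Your proof is correct and complete. Worth noting: the paper itself offers \emph{no} proof of this lemma at all --- it states the three assertions and immediately passes to the summarizing formula $\br(f)=\overline{f}(\br(\varphi))\cup\br(\overline{f})$, evidently regarding the facts as standard. So there is no ``paper argument'' to compare against; your stabilizer computation supplies exactly the missing content. The key identifications you make --- that $\varphi$ is Galois with group $N$ and inertia $N\cap G_x$, and that $\overline{f}$ is Galois with group $G/N$ and inertia $NG_x/N\cong G_x/(N\cap G_x)$ at $\overline{x}=\varphi(x)$ --- are the right ones, your use of normality (to descend the action and to make $NG_x$ a subgroup) is correctly flagged, and parts (1)--(3) do follow formally from them; your argument also immediately yields the paper's summary formula, since $y\in\br(f)$ iff $G_x\neq 1$ iff ($N\cap G_x\neq 1$ or $G_x\not\subseteq N$), the two cases corresponding to $\overline{f}(\br(\varphi))$ and $\br(\overline{f})$ respectively.
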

Indeed the above statements can be summarized into the formula
\begin{equation}
\br(f)=\overline{f}(\br(\varphi))\cup \br(\overline{f})
\end{equation}

\begin{theorem}
Suppose that $n=pn^{\prime}$, where $p\geq 7$ and $n^{\prime}\geq 1$. Then there does not exist any non-compact Shimura curve contained generically in the Torelli locus of $n$-cyclic covers of elliptic curves with $r\geq 3$ branch points of genus $g\geq 8$.
\end{theorem}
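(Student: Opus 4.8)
The plan is to reduce to the prime case already settled in Theorem \ref{non-comp} by passing to a $\Z/p\Z$-quotient of the given family. Since $G\cong\Z/n\Z$ is cyclic and $p\mid n$, there is a unique subgroup $N\cong\Z/n^{\prime}\Z$ of index $p$; being a subgroup of an abelian group it is normal, and $G/N\cong\Z/p\Z$. Suppose for contradiction that a non-compact Shimura curve $C$ of the asserted type exists, and let $\overline{f}:\overline{S}\to\overline{B}$ be a family of semi-stable $\Z_n$-covers representing it, with smooth fibres $C_t\to C_t/G=E_t$ and $G$ acting on $\overline{S}$ (achieved after a base change). Forming the fibrewise quotient by $N$, and if necessary a further base change to restore semi-stability, I obtain a family $\overline{f}^{\prime}:\overline{S}/N\to\overline{B}$ whose smooth fibres $\overline{C}_t=C_t/N\to E_t$ are $\Z/p\Z$-covers of the elliptic curves $E_t$. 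The aim is to show that $\overline{f}^{\prime}$ again represents a non-compact Shimura curve of genus $\geq 8$, contradicting Theorem \ref{non-comp}.

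The Shimura property should be inherited through the eigenspace decomposition. The local system of $\overline{f}^{\prime}$ is the $N$-invariant part of $R^1\overline{f}_*\Q$, so its Higgs bundle is the direct summand
\[\bigoplus_{j=0}^{p-1}(E^{1,0}_{\overline{B}}\oplus E^{0,1}_{\overline{B}},\theta_{\overline{B}})_{jn^{\prime}},\]
consisting of those $G$-eigenspaces whose character is trivial on $N$. Because the splitting into the ample part $A^{1,0}_{\overline{B}}\oplus A^{0,1}_{\overline{B}}$ and the flat unitary part $F^{1,0}_{\overline{B}}\oplus F^{0,1}_{\overline{B}}$ is compatible with the eigenspace decomposition and $\theta_{\overline{B}}$ preserves it, the restriction of $\theta_{\overline{B}}$ to the ample part of this summand is again an isomorphism, so the Arakelov (maximal Higgs field) equality descends and $\overline{f}^{\prime}$ represents a Shimura curve. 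Non-compactness is preserved since the semi-stable degenerations over $\Delta_{nc}$ descend to the quotient, keeping $\Delta_{nc}$ non-empty.

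For the genus, Lemma \ref{ramification of quotient} identifies the branch points of $\overline{f}^{\prime}$ with those branch points $P_k$ of $\overline{f}$ whose inertia remains non-trivial in $G/N$, i.e. those with $p\nmid u_k$; let $r_0$ be their number. The Chevalley-Weil formula (Theorem \ref{CW}) applied to the $\Z/p\Z$-cover $\overline{C}_t\to E_t$ then gives
\[g(\overline{C}_t)=1+\tfrac{(p-1)\,r_0}{2}.\]
For $p\geq 7$ this is $\geq 8$ as soon as $r_0\geq 3$, and the hypothesis $r\geq 3$ is used precisely to secure this when the given branch points remain $\Z/p\Z$-active (for instance for totally ramified covers, where $p\nmid u_k$ and hence $r_0=r$); the general book-keeping of which points survive is part of the obstacle below. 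Since the surviving branch points, or else the base elliptic curves $E_t$, vary in the family, $\overline{f}^{\prime}$ is non-isotrivial, so it represents a genuine curve. Applying Theorem \ref{non-comp} to $\overline{f}^{\prime}$ then yields the desired contradiction.

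The main obstacle is the middle step: one must rule out that the variation of Hodge structure carried by the Shimura curve is concentrated entirely in the complementary eigenspaces $(E^{1,0}_{\overline{B}}\oplus E^{0,1}_{\overline{B}},\theta_{\overline{B}})_i$ with $n^{\prime}\nmid i$, equivalently that the ample summand meets the $N$-invariant eigenspaces so that $\overline{f}^{\prime}$ is genuinely non-isotrivial with maximal Higgs field rather than a constant family. This is where the rank relations of Lemma \ref{higgsrek} (notably $\rk A^{1,0}_{\overline{B},i}=\rk A^{1,0}_{\overline{B},n-i}$) and non-vanishing statements in the spirit of Lemma \ref{rank neq0} must be brought to bear, together with the branch-point analysis of Lemma \ref{ramification of quotient} needed to guarantee $r_0\geq 3$; establishing these two facts is exactly the place where the hypotheses $p\geq 7$ and $r\geq 3$ enter essentially.
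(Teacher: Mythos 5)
Your proposal is correct and takes essentially the same route as the paper's proof: quotient the family by the order-$n'$ subgroup of $\Z_n$ (the paper writes ``the subgroup generated by $n'$,'' an apparent slip, since the quotient construction requires $\langle p\rangle\cong\Z_{n'}$, which is what you use), obtain a family of $\Z_p$-covers of elliptic curves, bound its genus via Riemann--Hurwitz on the surviving branch points, and contradict Theorem \ref{non-comp}. The two obstacles you flag are not deficiencies relative to the paper --- they are exactly the points the paper glosses over: it asserts, without support from the stated hypotheses, that ``the covers are all totally ramified,'' so that every branch point survives in the quotient (your $r_0=r\geq 3$), and it applies Theorem \ref{non-comp} to the quotient family without checking that the non-compact Shimura property and non-isotriviality descend, which your eigenspace/Higgs-bundle argument at least sketches.
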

\begin{proof}
Assume on the contrary that such a family exists. We consider the subgroup $H$ of $\Z_n$ generated by the element $n^{\prime}$. Consider the family of quotient covers $\overline{f}_1:\overline{S}/H\to\overline{B}$. The Galois group of the quoteint is isomorphic to $\Z_p$. In other words, we have a family of $p$-cyclic covers of elliptic curve. Note that since the covers are all totally ramified, the branch points of the quotient covers in $\overline{f}_1$ have also $r\geq 3$ is equal to $r=\br(f)$.  The Riemann-Hurwitz gives that the genus $g^{\prime}$ of the quotient curve is
\begin{equation}
g^{\prime}=1+\frac{1}{2}r(p-1)
\end{equation}
which gives that $g^{\prime}\geq 8$. This is in contradiction with Theorem \ref{non-comp}.
\end{proof}
Similar to the above Theorem ~\ref{non-comp p gen} we can also generalize the above theorem for families of curves of genus $s$.

\begin{theorem}\label{non-comp n gen}
Let $s>0$ be a positive integer. Then there exists an integer $c(s)$ such that if $n$ has a  prime factor $p\geq c(s)$ there does not exist any non-compact Shimura curve contained generically in the Torelli locus $\sT_g$ of $n$-cyclic covers of curves of genus $s$ of genus $g\geq 8$.
\end{theorem}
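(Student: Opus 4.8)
The plan is to reduce Theorem~\ref{non-comp n gen} to the already-established prime case, Theorem~\ref{non-comp p gen}, by passing to a suitable cyclic quotient, exactly as was done in the preceding theorem for elliptic base curves. Suppose for contradiction that a non-compact Shimura curve $C$ lies generically in the Torelli locus $\sT_g$ of $n$-cyclic covers of curves of genus $s$ with $g\geq 8$, where $n$ has a prime factor $p$ with $p\geq c(s)$. Write $n=pn'$ and let $H\subseteq\Z_n$ be the unique subgroup of order $n'$ (generated by the element $p$, or equivalently the image of $n'\Z$), so that $\Z_n/H\cong\Z_p$. Taking the fiberwise quotient by $H$, I would form the family $\overline{f}_1:\overline{S}/H\to\overline{B}$ whose smooth fibers are $p$-cyclic covers $\overline{C}_t=C_t/H$ of the fixed genus-$s$ base curve. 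The key point is that this quotient operation is compatible with the Higgs-bundle decomposition: the eigenspaces of $\Z_p$ on the quotient family are precisely the $H$-invariant parts of the eigenspaces of $\Z_n$ on the original family, so the quotient family again represents a Shimura curve (the defining degree equality of the Viehweg--Zuo characterization is inherited by the invariant sub-Higgs-bundle), and it is again non-compact since non-compactness is detected on $\overline{B}$, which is unchanged.

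The constant $c(s)$ should be taken to be the constant $b(s)$ furnished by Theorem~\ref{non-comp p gen} (the threshold beyond which no non-compact Shimura curve exists among $p$-cyclic covers of genus-$s$ curves in the Torelli locus for $g\geq 8$). With this choice, once I verify that the genus $g'$ of the quotient curves $\overline{C}_t$ still satisfies $g'\geq 8$, the quotient family contradicts Theorem~\ref{non-comp p gen} directly. To control $g'$ I would apply the Chevalley--Weil data / Riemann--Hurwitz formula to the $H$-quotient $C_t\to\overline{C}_t$: since the covers are cyclic and the ramification of the quotient map is governed by Lemma~\ref{ramification of quotient}, the genus $g'$ of $\overline{C}_t$ is bounded below by an explicit expression in $p$, $s$, and the number of branch points $r$, growing with $p$. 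Thus for $p\geq c(s)$ large enough one forces $g'\geq 8$, completing the contradiction.

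The main obstacle, and the step requiring genuine care rather than formal reduction, is the genus bound on the quotient together with ensuring that the quotient family still satisfies the \emph{hypotheses} of Theorem~\ref{non-comp p gen} — in particular that it represents a genuine, non-isotrivial Shimura curve generically in the Torelli locus and has no hyperelliptic fibers (the standing assumption used in the Viehweg--Zuo characterization~(3.3.1)). Non-isotriviality of $\overline{f}_1$ is not automatic from non-isotriviality of $\overline{f}$, since quotienting could in principle kill variation; I would argue that if $\overline{f}_1$ were isotrivial, the induced sub-Higgs-bundle would be flat, forcing $A^{1,0}_{\overline{B}}$ to vanish in the relevant eigenspaces, which contradicts the ampleness furnished by Lemma~\ref{higgsrek} combined with the Chevalley--Weil computation of the eigenspace ranks for the $\Z_p$-quotient. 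The hyperelliptic issue is handled by a further base change if necessary, as in the setup of Section~3.2. Once these compatibility checks are in place, the contradiction with Theorem~\ref{non-comp p gen} is immediate, and the theorem follows for $c(s)=b(s)$ (enlarged if needed to secure $g'\geq 8$).
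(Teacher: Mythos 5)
Your proposal is correct and follows essentially the same route as the paper, whose proof of Theorem~\ref{non-comp n gen} is precisely the quotient reduction used in the preceding theorem: pass to the family of quotient covers by the subgroup $H$ with $\Z_n/H\cong\Z_p$, check the quotient genus still exceeds $7$, and invoke the prime case (here Theorem~\ref{non-comp p gen}, with $c(s)$ coming from $b(s)$). If anything, you are more careful than the paper: you take $H$ to be the order-$n'$ subgroup generated by $p$ (the paper's earlier proof says ``generated by the element $n'$'', which has order $p$ and would give a $\Z_{n'}$-quotient rather than a $\Z_p$-quotient), and you make explicit the compatibility checks --- that the invariant sub-Higgs bundle inherits the Viehweg--Zuo equality so the quotient family again represents a non-compact Shimura curve, and that it remains non-isotrivial --- which the paper leaves entirely implicit.
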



\begin{thebibliography}{00}
\bibitem{BHPV}
Barth, Wolf P., Hulek, Klaus, Peters, Chris A.M., Van de Ven, Antonius, \emph{Compact Complex Surfaces.}(2004) Ergebnisse der Mathematik und ihrer Grenzgebiete. 3. Folge., 4, Springer-Verlag, Berlin. 
\bibitem{CLZ}
K. Chen, X. Lu, K. Zuo, \emph{Finiteness of superelliptic curves with CM Jacobian.}  Annales scientifiques de l'ENS, 54 (2019), pp. 1591-1662.
\bibitem{DN}
J. de Jong, R. Noot, \emph{Jacobians with complex multiplication.} In: Arithmetic algebraic geometry (Texel, 1989), Progr. Math. 89, Birkh\"auser, 1991, 177-192.

\bibitem{F}
P. Frediani, \emph{Abelian covers and second fundamental form.}  arXiv:2105.07947. 
\bibitem{FGP}
P. Frediani, A. Ghigi, M. Penegini, \emph{Shimura varieties in the Torelli locus via Galois coverings.} IMRN. Vol. 2015, No. 20, pp. 10595-10623.
\bibitem{FPP}
P. Frediani, M. Penegini, P. Porru, \emph{ Shimura varieties in the Torelli locus via Galois coverings of elliptic curves. } Geometriae Dedicata 181 (2016) 177-192.
\bibitem{Loo}
L. Looijenga, \emph{Uniformization by Lauricella functions-an overview of the theory of Deligne-Mostow.} Lectures Notes for the CIMPA Summer School Arithmetic and Geometry Around Hypergeometric Functions 207-244, Progr. Math. 260, Birkh\"auser Verlag Basel (2007).
\bibitem{LZ}
X. Lu, K. Zuo, \emph{The Oort conjecture on Shimura curves in the Torelli locus of curves.} Journal de Mathématiques Pures et Appliquées
Volume 123, March 2019, Pages 41-77.
\bibitem{Mil}
J. Milne, \emph{Introduction to Shimura Varieties.} Harmonic analysis, the trace formula, and Shimura varieties 265-378, Clay Math. Proc. 4, AMS Providence 2005. 
\bibitem{Mir}
R. Miranda, \emph{Algebraic Curves and Riemann Surfaces.} Graduate Studies in Mathematics series No. 5, AMS (1995). 
\bibitem{M}
A. Mohajer, \emph{Shimura varieties and abelian covers of the line.} Houston J. Math. 46, (2020), no. 4, 953-972.
\bibitem{MZ}
A. Mohajer, K. Zuo, \emph{On Shimura subvarieties generated by families of abelian covers of $\mathbb{P}^{1}$.} Journal of Pure and Applied Algebra, 222 (4), 2018, 931-949. 
\bibitem{M1}
B. Moonen, \emph{Special subavarieties arising from families of cyclic covers of the projective line}. Documenta Mathematica 15 (2010) 793-819.
\bibitem{MO}
B. Moonen, F. Oort, \emph{The Torelli locus and special subvarieties}. 2010, preprint. Submitted to the Handbook of moduli (G.Farkas, I.Morrisson, eds).
\bibitem{R}
J. Rohde, \emph{Cyclic coverings, Calabi-Yau manifolds and complex multiplication}. Lecture Notes in Math. 1975, Springer, 2009.
\bibitem{VZ04}
E. Viehweg, K. Zuo, \emph{A characterization of certain Shimura curves in the moduli stack of abelian varieties.} J. Differential Geom. 66 (2004), no. 2, 233–287. 
\bibitem{VZ05}
E. Viehweg, K. Zuo, \emph{Complex multiplication, Griffiths-Yukawa couplings, and rigidity for families of hypersurfaces.} J. Alg. Geom. 14 (2005) 481-528.



	
	
\end{thebibliography}
\end{document}